\newtheorem{rem}{Remark}
\title{Lagrangian-based methods in convex optimization: prediction-correction frameworks with ergodic  convergence rates\thanks{This work was supported by the National Natural Science Foundation of China under Grant 12171021 and
the Fundamental Research Funds for the Central Universities.}}
\author{Tao Zhang\footnotemark[2] \and Yong Xia\footnotemark[2] \and Shiru Li\footnotemark[2]}
\begin{document}
\maketitle
\renewcommand{\thefootnote}{\fnsymbol{footnote}}
\footnotetext[2]{School of Mathematical Sciences, Beihang University, Beijing, 100191, P. R. China  ({\tt (T. Zhang) shuxuekuangwu@buaa.edu.cn};  {\tt (Y. Xia) yxia@buaa.edu.cn}; {\tt (S. Li, Corresponding author) lishiru@buaa.edu.cn}).}

\begin{abstract}
We study the convergence rates of the classical Lagrangian-based methods and their variants for solving  convex optimization problems with equality constraints. We present a generalized prediction-correction framework to establish $O(1/K^2)$ ergodic convergence rates. Under the strongly convex assumption, based on the presented prediction-correction framework, some Lagrangian-based methods with $O(1/K^2)$ ergodic convergence rates are presented, such as the augmented Lagrangian  method with the indefinite proximal term, the alternating direction method of multipliers (ADMM) with a larger step size up to $(1+\sqrt{5})/2$,
the linearized ADMM with the indefinite proximal term,  and the  multi-block ADMM type method (under an alternative assumption that the gradient of one block is Lipschitz continuous).
\end{abstract}

\begin{keywords}
Lagrangian-based methods, Convex optimization, ADMM,   Convergence rate.	
	\end{keywords}

\begin{AMS}
47H09, 47H10, 90C25, 90C30	
	\end{AMS}

\pagestyle{myheadings}
\thispagestyle{plain}
\markboth{T ZHANG,  Y XIA,  AND S LI}{ERGODIC CONVERGENCE RATES}
\section{Introduction}

Augmented Lagrangian method (ALM) \cite{hestenes1969multiplier,powell1969method} is a classic and efficient tool for solving the convex programming problem with linear equality constraints:
\begin{equation}\label{P1}\tag{P1}
\min\{ f(x):~ Ax=b\},
\end{equation}
where $f:\mathbb{R}^n\rightarrow\mathbb{R}$ is closed, proper, convex, but not necessarily smooth, $A\in\mathbb{R}^{l\times n}$, and $b\in\mathbb{R}^{l}$. The feasible region of \eqref{P1} is denoted by $\Omega$. Even nowadays, \eqref{P1} has a wide range of applications in compressed sensing, image processing, machine learning and so on.
There are some well-known variants of ALM, for example, the proximal ALM introduced by  Rockafeller \cite{rockafellar1976augmented,rockafellar1976monotone} and the linearized ALM with different special proximity terms.

Large-scale
\eqref{P1} can be efficiently solved if it is equipped with a separable structure. The simplest one reads as
\begin{equation}\label{P2}\tag{P2}
\min \left\{f(x)=f_1(x_1)+f_2(x_2):~ (Ax:=)A_1x_1+A_2x_2=b\right\},
\end{equation}
where $A_1\in\mathbb{R}^{l\times n_1}$, $A_2\in\mathbb{R}^{l\times n_2}$, $n_1+n_2=n$, $b\in\mathbb{R}^{l}$, $f_1:\mathbb{R}^{n_1}\rightarrow\mathbb{R}$ and $f_2:\mathbb{R}^{n_2}\rightarrow\mathbb{R}$ are proper, closed and convex. The alternating direction method of multipliers (ADMM) \cite{gabay1976dual,glowinski1975} is a popular Lagrangian-based approach for solving \eqref{P2}. There are also some variants of ADMM. Following the idea of the proximal ALM, the
proximal ADMM \cite{eckstein1994some} introduces a proper proximal term to make the primal subproblems easier to solve. Moreover, with a carefully selected proximal term, it leads to the linearized ADMM \cite{yang2013linearized}.

As a further extension of \eqref{P2}, we then consider
\begin{equation}\label{P3}\tag{P3}
\min\left\{f(x)=\sum_{i=1}^{m}f_i(x_i):~ (Ax:=)\sum_{i=1}^{m}A_ix_i=b\right\},
\end{equation}
where $m\geq2$, $f_i:\mathbb{R}^{n_i}\rightarrow\mathbb{R}$ is closed proper convex for $i\in[1,2,\cdots,m]$, $A_i\in\mathbb{R}^{l\times n_i}$, $\sum_{i=1}^{m}n_i=n$ and $b\in\mathbb{R}^{l}$,  which remains a special case of \eqref{P1}.
It can be solved by a few ADMM type methods
\cite{he2012alternating,he2017convergence}.

The purpose of this paper is to study the ergodic convergence rates of the
Lagrangian-based methods for solving \eqref{P1}, \eqref{P2} and \eqref{P3}, respectively.

In 2012, He and Yuan \cite{2012On}  presented a unified prediction-correction framework to greatly simplify the traditional heavy analysis on the convergence of the Lagrangian-based methods. Based on their framework, one can easily establish $O(1/K)$ convergence rates of the primal-dual gap in the ergodic sense and point-wise in the non-ergodic sense, see  \cite{he2018my,he2014strictly,he2016convergence,he2020optimally,he2015splitting}. Shefi and Teboulle \cite{shefi2014rate} also presented several Lagrangian-based methods with $O(1/K)$ ergodic convergence rates. In fact, there are plenty  of works on $O(1/K)$ convergence rates in the  ergodic sense, see \cite{chambolle2011first,chambolle2016ergodic,he20121,monteiro2013iteration} and references therein.

To accelerate the ergodic convergence rate,  Chambolle and Pock \cite{chambolle2011first,chambolle2016ergodic} presented a primal-dual algorithm for solving the min-max problem with $O(1/K^2)$ ergodic convergence rate (in terms of the primal-dual gap) under the  strongly convex assumption. Xu \cite{xu2017accelerated} established the same convergence rate of ADMM. It was further extended to  the linearized ADMM by Ouyang et al. \cite{ouyang2015accelerated}. Assuming that the gradient is  Lipschitz continuous rather than requiring the objective function to be strongly convex,
Tian and Yuan \cite{2016An} established $O(1/K^2)$ ergodic convergence rate of ADMM.

{\bf Contributions.}
First, motivated by the unified prediction-correction framework due to He and Yuan \cite{2012On}, we present a generalized prediction-correction framework to achieve $O(1/K^2)$ ergodic convergence rates for the  Lagrangian-based methods by dynamically setting the parameters. For solving \eqref{P1} or \eqref{P2} under the assumption that $f$ or $f_2$ is  strongly convex,
we succeed in establishing $O(1/K^2)$ ergodic convergence rates for the Lagrangian-based methods such as the general linearized  ALM with an indefinite proximal term, ADMM with a larger step size up to $(1+\sqrt{5})/2$  and  the linearized ADMM with an indefinite proximal term.  For solving \eqref{P3}, we obtain $O(1/K^2)$ ergodic convergence rate for the multi-block ADMM type methods under the assumption that  one  item of the objective function is gradient  Lipschitz continuous.

{\bf Outline.}
The remainder of this paper is organized as follows. Section 2 presents the generalized prediction-correction framework with ergodic convergence rates and convergence conditions. Sections 3 establishes $O(1/K^2)$ ergodic convergence rates under the assumption that the objective function is either strongly convex or gradient Lipschitz continuous.
Conclusions are made in Section 4.

{\bf Notation.} Let $x=(x_1,\cdots,x_m)\in \mathbb{R}^n$ be a column vector and $x_i$ be the $i$-th component  or subvector of $x$. The inner product of $x,y$ is given by $\langle x,y\rangle=x^Ty$.
Let $I_n\in\mathbb{R}^{n\times n}$ be the  identity matrix.
%For $D\succ 0$, we define the matrix norm as $\|x\|_D=\sqrt{x^TDx}$.  $\|x\| $ standard Eulcidean norm.
Define $\|x\|_D^2= x^TDx$  no matter whether $D$ is positive semidefinite. The Euclidean norm $\|x\|$ is $\|x\|_{I_n}$.
Let $\sigma_{\max}(D)$ and $\sigma_{\min}(D)$ be the maximal  and minimal eigenvalues of $D$, respectively.
Let $\partial f(x)$ be the subdifferential  of the convex function $f(x)$.  Denote by
$\nabla f(x)$   the  gradient of the smooth function $f(x)$.
The following two definitions are standard.
\begin{definition}
	$f:\mathbb{R}^n\rightarrow\mathbb{R}$  is  $\sigma~(\geq0)$-strongly convex if there is  a constant $\sigma\geq0$ such that$$f(y)\geq f(x)+\langle f'(x),y-x\rangle+\frac{\sigma}{2}\|y-x\|^2,~f'(x)\in\partial f(x),~\forall x,y\in\mathbb{R}^n.
	$$
\end{definition}
\begin{definition}
	$f:\mathbb{R}^n\rightarrow\mathbb{R}$  is  $L$-gradient Lipschitz continuous  if $f$ is differentiable and there is a constant $L>0$ such that
	$$f(y)\leq f(x)+\langle \nabla f(x),y-x\rangle+\frac{L}{2}\|y-x\|^2,~\forall x,y\in\mathbb{R}^n.
	$$
\end{definition}
If $f$ is $L$-gradient Lipschitz continuous, we have
\begin{equation}
f(y)\geq f(x)+\langle \nabla f(x),y-x\rangle+\frac{1}{2L}\|\nabla f(x)-\nabla f(y)\|^2,~ \forall x,y\in\mathbb{R}^n.
\end{equation}

\section{The generalized prediction-correction framework with ergodic convergence rates}\label{S2}
We  write in the following the Lagrangian function of \eqref{P1}-\eqref{P3}:
$$L(x,\lambda)=f(x)-\lambda^T(Ax-b),$$
where $\lambda$ is the Lagrange multiplier. We call $(x^*,\lambda^*)$ a saddle point of $L(x,\lambda)$ if it holds that
$$
L(x^*,\lambda)\leq L(x^*,\lambda^*)\leq L(x,\lambda^*),~\forall x\in \mathbb{R}^{n},~\forall\lambda\in\mathbb{R}^{l}.
$$
Throughout this paper, for \eqref{P1}, $x\in\mathbb{R}^{n},x'\in\Omega~{\rm and}~\lambda\in\mathbb{R}^{l}$, we define
\begin{equation}
\label{V2}u=v=\begin{pmatrix}
x\\ \lambda
\end{pmatrix},u'=v'=\begin{pmatrix}
x'\\ \lambda
\end{pmatrix},u^*=v^*=\begin{pmatrix}
x^*\\ \lambda^*
\end{pmatrix}, F(u)=\begin{pmatrix}
-A^T\lambda\\Ax-b
\end{pmatrix}.
\end{equation}
For \eqref{P2}, $x_i\in\mathbb{R}^{n_i} $ ($i=1,2$) and $\lambda\in\mathbb{R}^{l}$, we define
\begin{equation}
\label{V18}
\begin{aligned}
&u=\begin{pmatrix}
x_1\\x_2\\\lambda
\end{pmatrix}, u'=\begin{pmatrix}
x_1'\\x_2'\\\lambda
\end{pmatrix},u^*=\begin{pmatrix}
x_1^*\\x_2^*\\\lambda^*
\end{pmatrix},~F(u)=\begin{pmatrix}
-A_1^T\lambda\\-A^T_2\lambda\\Ax-b
\end{pmatrix},\\&
v=\begin{pmatrix}
x_2\\\lambda
\end{pmatrix},v'=\begin{pmatrix}
x_2'\\\lambda
\end{pmatrix},v^*=\begin{pmatrix}
x_2^*\\\lambda^*
\end{pmatrix}, x'=\begin{pmatrix}
x_1'\\x_2'
\end{pmatrix}\in\Omega.
\end{aligned}
\end{equation}
For \eqref{P3},  $x_i\in\mathbb{R}^{n_i} $ ($i=1,\dots,m$) and $\lambda\in\mathbb{R}^{l}$, we define
\begin{equation}
\begin{aligned}
&\label{D13}
u=\begin{pmatrix}
x_1\\ \vdots\\x_m\\\lambda
\end{pmatrix},u'=\begin{pmatrix}
x_1'\\ \vdots\\x_m'\\\lambda
\end{pmatrix},u^*=\begin{pmatrix}
x_1^*\\ \vdots\\x_m^*\\\lambda^*
\end{pmatrix},F(u)=\begin{pmatrix}
-A_1^T\lambda\\\vdots\\-A_m^T\lambda\\Ax-b
\end{pmatrix},\\&v=\begin{pmatrix}
A_2x_2\\ \vdots\\A_mx_m\\\lambda
\end{pmatrix},v'=\begin{pmatrix}
A_2x_2'\\ \vdots\\A_mx_m'\\\lambda
\end{pmatrix},v^*=\begin{pmatrix}
A_2x_2^*\\ \vdots\\A_mx_m^*\\\lambda^*
\end{pmatrix},
x'=\begin{pmatrix}
x_1'\\ \vdots\\x_m'
\end{pmatrix}\in\Omega.
\end{aligned}
\end{equation}
We will always  use the above definitions of $u,~v$  and  $x',~u',~v'$  for \eqref{P1}-\eqref{P3}, unless explicitly stated otherwise.
As shown in \cite{he2018my,he2014strictly,he2016convergence,2012On}, the saddle point $(x^*,\lambda^*)$ can be alternatively characterized as a solution point of the
following variational inequality (VI):
\begin{equation}
\label{V1}
f(x)-f(x^*)+(u-u^*)^TF(u^*)\geq0,~\forall u\in \mathbb{R}^{n+l}.
\end{equation}

\subsection{He and Yuan's prediction-correction framework}
The following framework due to He and Yuan \cite{2012On} is fundamental in providing convergent algorithms for solving \eqref{V1}, see also \cite{he2014strictly,he2016convergence,he2018class}.
\begin{framed}
	\noindent {\bf[Prediction step.]} With a given $v^k$, find $\widetilde{u}^k$ such that
	\begin{equation}\label{V3} f(x)-f(\widetilde{x}^k)+(u-\widetilde{u}^k)^TF(\widetilde{u}^k)\geq(v-\widetilde{v}^k)^TQ(v^k-\widetilde{v}^k), ~\forall u,
	\end{equation}
	where $Q^T +Q\succeq 0$ (noting that $Q$ is not necessarily symmetric).
	
	\noindent {\bf [Correction step.]} Update $v^{k+1}$ by
	\begin{equation}\label{V4}
	v^{k+1}=v^k-M(v^k-\widetilde{v}^k).
	\end{equation}
\end{framed}

In order to guarantee the convergence of  the algorithms satisfying the above framework, He and Yuan \cite{2012On} present  the following additional assumptions on the  selections of the matrices $Q$ and $M$.
\begin{framed}
	\noindent{\bf[Convergence Condition.]}
	For the matrices $Q$ and $M$ used in \eqref{V3} and \eqref{V4}, there exists a matrix $H
	\succeq0$ such that
	\begin{eqnarray}
	Q&=&	HM, 	\label{V5}\\
	G:=Q^T +&Q&-M^THM	.\label{V6}
	\end{eqnarray}
\end{framed}
%Then the following lemma holds.
We remark that  the condition $Q^T +Q\succ 0$ and $H\succ0$ in the original version of He and Yuan's prediction-correction framework has  been relaxed to $Q^T +Q\succeq 0$  and $H\succeq 0$ without any additional proof.

\begin{lemma} [\cite{he2014strictly,he2016convergence,2012On,he2018class}]\label{L3}
	Under the convergence conditions \eqref{V5}-\eqref{V6},	for the prediction-correction framework \eqref{V3}-\eqref{V4}, we have
	\begin{equation}\label{V36}
	\begin{aligned}
	&f(x)-f(\widetilde{x}^k)+(u-\widetilde{u}^k)^TF(\widetilde{u}^k)
	\\\geq&(v-\widetilde{v}^k)^TQ(v^k-\widetilde{v}^k)
	\\=&\frac{1}{2}(\|v^{k+1}-v\|^2_{H}-\|v^k-v\|^2_{H})+\frac{1}{2}\|v^k-\widetilde{v}^k\|^2_{G},~\forall u.
	\end{aligned}
	\end{equation}
\end{lemma}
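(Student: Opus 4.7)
The inequality part of \eqref{V36} is nothing more than the prediction step \eqref{V3} itself, so the real content is to verify the identity
$$(v-\widetilde{v}^k)^TQ(v^k-\widetilde{v}^k)=\tfrac{1}{2}\bigl(\|v^{k+1}-v\|^2_{H}-\|v^k-v\|^2_{H}\bigr)+\tfrac{1}{2}\|v^k-\widetilde{v}^k\|^2_{G}.$$
My plan is to prove this by a direct algebraic manipulation that feeds in the three structural facts one by one: the correction formula \eqref{V4}, the decomposition $Q=HM$ from \eqref{V5}, and the definition $G=Q^T+Q-M^THM$ from \eqref{V6}.

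The first move is to split $v-\widetilde{v}^k=(v-v^k)+(v^k-\widetilde{v}^k)$, so that the left-hand side becomes
$$-(v^k-v)^TQ(v^k-\widetilde{v}^k)+(v^k-\widetilde{v}^k)^TQ(v^k-\widetilde{v}^k).$$
The second term is purely quadratic in $v^k-\widetilde{v}^k$, and I would symmetrize it to $\tfrac{1}{2}\|v^k-\widetilde{v}^k\|^2_{Q^T+Q}$. For the first term, I would substitute $Q=HM$ and then use the correction step \eqref{V4} in the form $M(v^k-\widetilde{v}^k)=v^k-v^{k+1}$ to rewrite it as $-(v^k-v)^TH(v^k-v^{k+1})$. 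This is the step where the entire prediction-correction structure is exploited.

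From there, treating $H$ as symmetric PSD, I would expand $\|v^{k+1}-v\|^2_H$ via $v^{k+1}-v=(v^k-v)-M(v^k-\widetilde{v}^k)$ to obtain
$$\|v^{k+1}-v\|^2_H=\|v^k-v\|^2_H-2(v^k-v)^THM(v^k-\widetilde{v}^k)+\|v^k-\widetilde{v}^k\|^2_{M^THM},$$
which rearranges to
$$-(v^k-v)^TQ(v^k-\widetilde{v}^k)=\tfrac{1}{2}\bigl(\|v^{k+1}-v\|^2_H-\|v^k-v\|^2_H\bigr)-\tfrac{1}{2}\|v^k-\widetilde{v}^k\|^2_{M^THM}.$$
Adding back $\tfrac{1}{2}\|v^k-\widetilde{v}^k\|^2_{Q^T+Q}$ and invoking \eqref{V6} collapses the two quadratic residuals into $\tfrac{1}{2}\|v^k-\widetilde{v}^k\|^2_G$, completing the identity.

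There is no serious obstacle here — the result is essentially a bookkeeping lemma of polarization type, and the only thing one must be careful about is that $Q$ need not be symmetric, so symmetrizations must be written as $\tfrac{1}{2}(Q^T+Q)$ rather than $Q$. Once the identity is in place, chaining it to the prediction inequality \eqref{V3} yields \eqref{V36} immediately.
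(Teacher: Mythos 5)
Your proposal is correct and is essentially the same argument the paper uses (the paper cites Lemma~\ref{L3} from the literature but proves the generalized version, Lemma~\ref{L4}, by exactly this kind of direct polarization bookkeeping: substitute $Q=HM$ and $M(v^k-\widetilde{v}^k)=v^k-v^{k+1}$, expand the quadratic forms, and collapse the residual via the definition of $G$). The only difference is cosmetic --- you split $v-\widetilde{v}^k=(v-v^k)+(v^k-\widetilde{v}^k)$ first and symmetrize the pure quadratic term, whereas the paper applies a four-point identity to $(v-\widetilde{v}^k)^TH(v^k-v^{k+1})$ and then simplifies $\|v^k-\widetilde{v}^k\|_H^2-\|v^{k+1}-\widetilde{v}^k\|_H^2$ separately; both routes use the same three ingredients in the same way.
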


We note that the  inequality in \eqref{V36} follows from the prediction step  and the  inequality follows from the  correction step and the  conditions \eqref{V5}-\eqref{V6}. Based on Lemma \ref{L3} and Jensen's inequality, if $G\succeq 0$,   one can  easily  obtain
$O(1/K)$ ergodic convergence rate  by adding up all the inequalities from $k=0$ to $K$.

\subsection{Generalized prediction-correction framework with ergodic convergence rates}\label{S2.2}
In order to pursue a faster convergence rate, we present the following generalized framework with dynamically updated matrices $Q^k$ and $M^k$. Moreover, comparing with prediction step \eqref{V3} in He and Yuan's framework, we need an extra item with a newly introduced variable $z$, see \eqref{V7}. The new variable $z$ will be  set as $x$, $x_i$,  $\nabla f(x_i)$ and so on, see details in the later sections.
\begin{framed}
	\noindent{\bf[Generalized prediction step.]} With a given $v^k$, find $\widetilde{u}^k$ such that
	\begin{equation}\label{V7}\tag{GPS} f(x)-f(\widetilde{x}^k)+(u-\widetilde{u}^k)^TF(\widetilde{u}^k)\geq(v-\widetilde{v}^k)^TQ^k(v^k-\widetilde{v}^k)+\frac{\sigma}{2}\|z^k-z\|^2_R,
	~\forall u,
	\end{equation}
	where $\sigma\geq0$, $R\succeq 0$ and $(Q^k)^T +Q^k\succeq0$ (noting that $Q^k$ is not necessarily symmetric).
	%	where the matrix $Q^k$ is not necessarily symmetric but $(Q^k)^T +Q^k$ is assumed to be positive definite and $\sigma>0.$
	
	\noindent {\bf [Generalized correction step.]} Update $v^{k+1}$ by
	\begin{equation}\label{V8}\tag{GCS}
	v^{k+1}=v^k-M^k(v^k-\widetilde{v}^k).
	\end{equation}
\end{framed}
Correspondingly,
we need the following dynamic sufficient condition to guarantee the convergence.
%Similarly, we give  the following dynamic convergence conditions.

\begin{framed}
	\noindent{\bf[Generalized convergence condition.]}
	For the matrices $Q^k$ and $M^k$ used in \eqref{V7} and \eqref{V8}, respectively, there exists a matrix $H^k$ such that
	\begin{equation}
	Q^k=H^kM^k,\label{V9}\tag{CC1}
	\end{equation}
	\begin{equation}
	\label{V10}\tag{CC2}G^k:=(Q^k)^T +Q^k-(M^k)^TH^kM^k.
	\end{equation}
\end{framed}
Similar to Lemma \ref{L3},
we obtain the following Lemma \ref{L4} by using the alternative  dynamic convergence conditions \eqref{V9}-\eqref{V10}.
\begin{lemma}\label{L4}
	Under the convergence conditions \eqref{V9}-\eqref{V10}, 	for the generalized 	prediction-correction framework \eqref{V7}-\eqref{V8}, we have
	\begin{equation}
	\label{V11}
	\begin{aligned}
	&f(x)-f(\widetilde{x}^k)+(u-\widetilde{u}^k)^TF(\widetilde{u}^k)
	\\\geq&(v-\widetilde{v}^k)^TQ^k(v^k-\widetilde{v}^k)+\frac{\sigma}{2}\|z^k-z\|^2_R
	\\ =&\frac{1}{2}\left(\|v^{k+1}-v\|^2_{H^k}+\sigma\|z^k-z\|^2_R-\|v^k-v\|^2_{H^k}\right)+\frac{1}{2}\|v^k-\widetilde{v}^k\|^2_{G^k},
	~\forall u.
	\end{aligned}
	\end{equation}
\end{lemma}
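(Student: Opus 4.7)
The inequality in \eqref{V11} is nothing other than (GPS) itself, so the substance lies in verifying the subsequent equality. Since the term $\tfrac{\sigma}{2}\|z^k-z\|_R^2$ appears identically on both sides, it passes through and the task reduces to the $z$-free identity
$$
(v-\widetilde{v}^k)^TQ^k(v^k-\widetilde{v}^k)
=\tfrac{1}{2}\bigl(\|v^{k+1}-v\|_{H^k}^2-\|v^k-v\|_{H^k}^2\bigr)+\tfrac{1}{2}\|v^k-\widetilde{v}^k\|_{G^k}^2,
$$
which is the time-varying analogue of the corresponding step in Lemma \ref{L3}.

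My plan is to split $v-\widetilde{v}^k=(v-v^k)+(v^k-\widetilde{v}^k)$ and expand the left-hand side as a cross term plus a pure quadratic term. The pure term $(v^k-\widetilde{v}^k)^TQ^k(v^k-\widetilde{v}^k)$ automatically symmetrizes to $\tfrac{1}{2}\|v^k-\widetilde{v}^k\|_{(Q^k)^T+Q^k}^2$, which is where the possibly non-symmetric $Q^k$ is handled cleanly: only its symmetric part survives.

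For the cross term $(v-v^k)^TQ^k(v^k-\widetilde{v}^k)$, I will apply (CC1) to replace $Q^k$ by $H^kM^k$, and then (GCS) to rewrite $M^k(v^k-\widetilde{v}^k)=v^k-v^{k+1}$, obtaining $(v-v^k)^TH^k(v^k-v^{k+1})$. The standard polarization identity $2a^TH^kb=\|a+b\|_{H^k}^2-\|a\|_{H^k}^2-\|b\|_{H^k}^2$, applied with $a=v-v^k$ and $b=v^k-v^{k+1}$, yields the telescoping difference $\tfrac{1}{2}(\|v-v^{k+1}\|_{H^k}^2-\|v-v^k\|_{H^k}^2)$ together with a residual $-\tfrac{1}{2}\|v^k-v^{k+1}\|_{H^k}^2=-\tfrac{1}{2}\|v^k-\widetilde{v}^k\|_{(M^k)^TH^kM^k}^2$.

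Adding the quadratic and cross contributions and invoking (CC2), the coefficient matrix of $\|v^k-\widetilde{v}^k\|^2$ collapses to $(Q^k)^T+Q^k-(M^k)^TH^kM^k=G^k$, delivering the desired equality. The only real obstacle is bookkeeping with the asymmetry of $Q^k$; this is neutralized by the fact that the polarization identity requires symmetry only of $H^k$, which is ensured by the implicit convention (used throughout the framework, as in Lemma \ref{L3}) that the metric matrices in the quadratic forms are symmetric. The whole argument is pointwise in $k$ because (CC1)–(CC2) are assumed for each iteration, so no additional monotonicity or uniformity in $k$ is needed at this stage.
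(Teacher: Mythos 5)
Your proposal is correct and follows essentially the same route as the paper's proof: both reduce the claim to the $z$-free identity, use \eqref{V9} and \eqref{V8} to replace $Q^k(v^k-\widetilde v^k)$ by $H^k(v^k-v^{k+1})$, apply polarization in the $H^k$-metric, and invoke \eqref{V10} to collect the residual into $\|v^k-\widetilde v^k\|^2_{G^k}$. The only difference is bookkeeping --- you split $v-\widetilde v^k=(v-v^k)+(v^k-\widetilde v^k)$ and symmetrize the pure quadratic term separately, whereas the paper applies a four-point identity to $(v-\widetilde v^k)^TH^k(v^k-v^{k+1})$ directly --- and your explicit remark that only the symmetry of $H^k$ (not of $Q^k$) is needed is accurate.
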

\begin{proof}
	The proof of this lemma is similar to Lemma \ref{L4}.
	The inequality in \eqref{V11} follows from the prediction step \eqref{V7}. The  equality follows from the fact that	\begin{eqnarray*}
	&&	(v-\widetilde{v}^k)^TQ^k(v^k-\widetilde{v}^k)\nonumber\overset{\eqref{V8}}{=}(v-\widetilde{v}^k)^TH^k(v^k-{v}^{k+1})\label{G3}\\\nonumber&=&\frac{1}{2}\left(\|v^{k+1}-v\|^2_{H^k}-\|v^{k}-v\|^2_{H^k}+\|v^k-\widetilde{v}^k\|_{H^k}^2-	\|v^{k+1}-\widetilde{v}^k\|_{H^k}^2\right)
	\end{eqnarray*}and
		\begin{eqnarray*}
	&&	\|v^k-\widetilde{v}^k\|_{H^k}^2-	\|v^{k+1}-\widetilde{v}^k\|_{H^k}^2\nonumber\\&=&	\|v^k-\widetilde{v}^k\|_{H^k}^2-	\|(v^{k}-\widetilde{v}^k)-(v^k-{v}^{k+1})\|_{H^k}^2\nonumber\\&\overset{\eqref{V7}}{=}&\|v^k-\widetilde{v}^k\|_{H^k}^2-	\|(v^{k}-\widetilde{v}^k)-M^k(v^k-\widetilde{v}^{k})\|_{H^k}^2\label{G2}\\&=&(v^k-\widetilde{v}^k)^T(2H^kM^k-(M^k)^TH^kM^k)(v^k-\widetilde{v}^k)\nonumber\\&=&(v^k-\widetilde{v}^k)^T((Q^k)^T+Q^k-(M^k)^TH^kM^k)(v^k-\widetilde{v}^k)\nonumber\\&=&\|v^k-\widetilde{v}^k\|_{G^k}^2. \nonumber
	\end{eqnarray*}
\end{proof}

Note that directly adding up \eqref{V11} from $k=0$ to $K$ may fail to establish the convergence except that $H^k\succeq 0$ is a constant matrix and $G^k\succeq 0$.
So we need the following additional convergence condition for our new framework.
\begin{framed}
	\noindent {\bf[Additional convergence condition.]}
	For $r^k>0$ and $H_0^k\succeq0$, it holds that
	\begin{equation}\label{V12}\tag{CC3}
	\begin{aligned}
	&r^k\left(\|v^{k+1}-v' \|^2_{H^k}+\sigma\|z^k-z'\|^2_R-\|v^k-v' \|^2_{H^k}+\|v^k-\widetilde{v}^k\|^2_{G^k}\right)\\& \geq\|v^{k+1}-v'\|^2_{H_0^{k+1}}-\|v^k-v' \|^2_{H_0^k}+\varTheta^{k+1}-\varTheta^k,~\varTheta^k\geq0.
	\end{aligned}
	\end{equation}
\end{framed}
\begin{rem}
	1. We only assume $H^k_0\succeq 0$ rather than $H^k \succeq0$.
	2. The new variable $z'$ allows to be set as $x'$, $x_i'$, or $\nabla f(x_i')$. 3. If $G^k\succeq 0$ and $H^k=H\succeq 0$, then \eqref{V12} holds with $r^k=1$, $H_0^k=H$ and $\varTheta^{k+1}-\varTheta^k=\|v^k-\widetilde{v}^k\|^2_{G^k}\geq0$.\end{rem}

Based on Lemma \ref{L4} and the condition \eqref{V12}, we have the following result.
\begin{theorem}\label{main}
	Under the convergence conditions \eqref{V9}-\eqref{V12}, for the generalized prediction-correction framework \eqref{V7}-\eqref{V8}, we have,
	$$
	f(\widetilde{X}^K)-f(x')-\lambda^T(A\widetilde{X}^K-b) \leq O\left(1 \Big/ \sum_{k=0}^{K}r^k \right),
	$$
	where $\widetilde{X}^K=(\sum_{k=0}^{K}r^k\widetilde{x}^k)/(\sum_{k=0}^{K}r^k)$. In particular, setting $r^k=O(k)$ achieves  $O(1/K^2)$ convergence rate.
\end{theorem}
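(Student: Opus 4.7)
The plan is to turn the inequality from Lemma \ref{L4} into a weighted telescoping sum by testing it at $u = u'$, invoking \eqref{V12} iteration by iteration, and finishing with Jensen's inequality on the convex objective $f$.

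First, I would specialize \eqref{V11} with $u = u'$, so that $v = v'$ and $z = z'$. For each of the three problem settings \eqref{V2}, \eqref{V18}, \eqref{D13}, the operator $F$ is affine with skew-symmetric linear part, so expanding $(u' - \widetilde{u}^k)^T F(\widetilde{u}^k)$ produces a term $-\widetilde{\lambda}^{k,T}(Ax' - b)$ that vanishes because of the primal feasibility $Ax' = b$ built into $x' \in \Omega$. This collapses the cross term to exactly $\lambda^T(A\widetilde{x}^k - b)$, and rearranging \eqref{V11} gives the per-iteration primal-dual-gap bound
\begin{equation*}
f(\widetilde{x}^k) - f(x') - \lambda^T(A\widetilde{x}^k - b) \leq -\tfrac{1}{2}\bigl(\|v^{k+1} - v'\|^2_{H^k} + \sigma\|z^k - z'\|^2_R - \|v^k - v'\|^2_{H^k} + \|v^k - \widetilde{v}^k\|^2_{G^k}\bigr).
\end{equation*}

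Next, I would multiply by $r^k > 0$ and apply the additional convergence condition \eqref{V12}, which was designed precisely to convert this non-telescoping right-hand side (the weight matrix $H^k$ varies with $k$) into a telescoping one involving the non-negative $H_0^k$ and the non-negative scalar $\Theta^k$. Summing the resulting inequality from $k = 0$ to $K$ telescopes the $H_0^k$ and $\Theta^k$ contributions, and dropping the non-positive end-terms $-\|v^{K+1}-v'\|^2_{H_0^{K+1}} - \Theta^{K+1}$ yields
\begin{equation*}
\sum_{k=0}^{K} r^k\bigl[f(\widetilde{x}^k) - f(x') - \lambda^T(A\widetilde{x}^k - b)\bigr] \leq \tfrac{1}{2}\bigl(\|v^0 - v'\|^2_{H_0^0} + \Theta^0\bigr),
\end{equation*}
whose right-hand side is a constant independent of $K$.

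Finally, I would invoke convexity of $f$ and linearity of $\widetilde{x} \mapsto \lambda^T(A\widetilde{x} - b)$, applying Jensen's inequality with weights $r^k / \sum_{k=0}^{K} r^k$. Since $\widetilde{X}^K$ is the corresponding convex combination of the $\widetilde{x}^k$, the average of the per-iteration gaps upper-bounds the gap at $\widetilde{X}^K$; dividing by $\sum_{k=0}^{K} r^k$ gives the announced $O(1/\sum r^k)$ rate, and the choice $r^k = \Theta(k)$ makes $\sum_{k=0}^{K} r^k = \Theta(K^2)$, hence $O(1/K^2)$. The main obstacle is essentially algebraic: checking the cross-term reduction $(u' - \widetilde{u}^k)^T F(\widetilde{u}^k) = \lambda^T(A\widetilde{x}^k - b)$ uniformly across the three definitions of $u$, $v$, and $F$, since in \eqref{V18} and \eqref{D13} the $v$-block contains only part of $x$ (in \eqref{D13} only the images $A_i x_i$); once this is settled, the remainder is the familiar weighted-telescoping-plus-Jensen template, with \eqref{V12} doing the work that upgrades the classical $O(1/K)$ rate to $O(1/K^2)$.
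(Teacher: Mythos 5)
Your proposal is correct and follows essentially the same route as the paper's proof: test \eqref{V11} at $u=u'$, multiply by $r^k$, invoke \eqref{V12} to telescope, sum over $k$, use the identity $(u'-\widetilde{u}^k)^TF(\widetilde{u}^k)=\lambda^T(A\widetilde{x}^k-b)$ coming from $Ax'=b$, and finish with Jensen's inequality. The only difference is cosmetic --- you carry out the cross-term reduction before summing and make the constant $\tfrac{1}{2}\bigl(\|v^0-v'\|^2_{H_0^0}+\varTheta^0\bigr)$ explicit, whereas the paper performs the reduction after summing and simply states boundedness.
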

\begin{proof}
	Multiplying both sides of \eqref{V11} (with $x=x'$, $u=u'$, $v=v'$ and $z=z'$) by $r^k$ and combining condition \eqref{V12}, we obtain
	$$\begin{aligned}
	&r^k[f(x')-f(\widetilde{x}^k)+(u'-\widetilde{u}^k)^TF(\widetilde{u}^k)]\\\geq&\frac{1}{2}(
	\|v^{k+1}-v'\|^2_{H_0^{k+1}}-\|v^k-v' \|^2_{H_0^k}+\varTheta^{k+1}-\varTheta^k
	).\end{aligned}
	$$
	Adding the above inequalities from $k=0$ to $K$, we obtain that
	\begin{equation}\nonumber
	\sum_{k=0}^{K}r^k[f(\widetilde{x}^k)-f(x')-(u'-\widetilde{u}^k)^TF(\widetilde{u}^k)]
	\end{equation}
	is bounded from above.
	Note that
	\begin{equation}\nonumber
	f(x')-f(\widetilde{x}^k)+(u'-\widetilde{u}^k)^TF(\widetilde{u}^k)=f(x')-f(\widetilde{x}^k)+\lambda^T(A\widetilde{x}^k-b).
	\end{equation}
	We complete the proof by Jensen's inequality.
\end{proof}

\section{$O(1/K^2)$ ergodic convergence rates}
%under strongly convex  or gradient Lipschitz continuous
Based on Theorem \ref{main}, we  establish $O(1/K^2)$ ergodic convergence rates of a few Lagrangian-based methods
for  solving  \eqref{P1}, \eqref{P2}, and \eqref{P3} under the strongly convex assumption or the gradient Lipschitz continuous assumption.
\subsection{Algorithms for solving \eqref{P1}}
We assume that $f$  is $\sigma~(\geq0)$-strongly convex.
The general proximal ALM (GPALM) for solving \eqref{P1} reads as:
\begin{eqnarray}
({\rm GPALM})~\left\{
\begin{array}{lcl}
x^{k+1}\in\arg\min\limits_{x}\{L_{\beta^k}(x,\lambda^k)+\frac{1}{2}\|x-x^k\|^2_{D^k}\},\\
\lambda^{k+1}=\lambda^{k}-\gamma\beta^k(Ax^{k+1}-b), ~\gamma\in(0,2],
\end{array}
\right.\label{GPALM}
\end{eqnarray}
where $\beta^k>0$, $D^k$ is a symmetric matrix, and $L_{\beta^k}(x,\lambda)$ is the augmented Lagrangian function of  \eqref{P1} defined as
\begin{eqnarray}\label{G1}
L_{\beta^k}(x,\lambda)=f(x)-\lambda^T(Ax-b)+\frac{\beta^k}{2}\|Ax-b\|^2.
\end{eqnarray}
Let $u,~v$ and $F(u)$ be defined  in \eqref{V2}.
Define the artificial vector $\widetilde{v}^k=(\widetilde{x}^k,\widetilde{\lambda}^k)$ with
\begin{equation}\label{V17}
\widetilde{x}^k=x^{k+1},~\widetilde{\lambda}^k=\lambda^k-\beta^k(A\widetilde{x}^k-b).\end{equation}
Using the optimality condition of $x$-subproblem in GPALM \eqref{GPALM}, we obtain
$$
\begin{aligned}
f(x)-&f(x^{k+1})+(x-x^{k+1})^T[-A^T\lambda^k+\beta^kA^T(Ax^{k+1}-b)\\&+D^k(x^{k+1}-x^k)]\geq\frac{\sigma}{2}\|x^{k+1}-x\|^2,~\forall x.
\end{aligned}
$$
Then, by an analysis similar to that in  \cite[Page 10]{he2020optimal}, we can write  GPALM \eqref{GPALM} as:
\begin{framed}
	\noindent{\bf[Prediction step.]}  With a given $v^k$, find $\widetilde{u}^k$ such that
	\begin{equation} 	f(x)-f(\widetilde{x}^k)+(u-\widetilde{u}^k)^TF(\widetilde{u}^k)\geq(v-\widetilde{v}^k)^TQ^k(v^k-\widetilde{v}^k)+\frac{\sigma}{2}\|\widetilde{x}^k-x\|^2, ~\forall u, 	\label{V13}
	\end{equation}
	{\rm where}~
	$Q^k=\begin{pmatrix}
	D^k&0\\
	0&\frac{1}{\beta^k}I_l
	\end{pmatrix}.$	
\end{framed}
According to the dual step in GPALM \eqref{GPALM}, we obtain
\begin{equation}\label{V21}
\lambda^{k+1}=\lambda^{k}-\gamma\beta^k(Ax^{k+1}-b)=\lambda^k-\gamma(\lambda^k-\widetilde{\lambda}^k).\end{equation}
Therefore, $v^{k+1}$ generated by GPALM can be viewed as the output after correcting $\widetilde{v}^k$ by the following scheme:
\begin{framed}
	\noindent {\bf [Correction step.]} Update $v^{k+1}$ by
	\begin{eqnarray}
	&&	v^{k+1}=v^k-M^k(v^k-\widetilde{v}^k),
	~
	M^k=\begin{pmatrix}
	I_n&0\\
	0&\gamma I_l
	\end{pmatrix}.	\label{V14}
	\end{eqnarray}
\end{framed}
Then GPALM \eqref{GPALM}  satisfies \eqref{V7}-\eqref{V8}.
We define \begin{eqnarray}
H^k=\begin{pmatrix}
D^k&0\\
0&\frac{1}{\gamma \beta^k}I_l
\end{pmatrix}, \nonumber
~G^k=\begin{pmatrix}
D^k&0\\
0&\frac{2-\gamma}{\beta^k}I_l
\end{pmatrix}.\label{V23}
\end{eqnarray}
Then $H^k$ and $G^k$ satisfy \eqref{V9}-\eqref{V10}. In order to show Theorem \ref{main}, it suffices to prove the following result.
%Hence we obtain %\cite{2016Positive}, we have
%\begin{equation}\label{3.1.1}
%f(x')-f(\widetilde{x}^k)+(u'-\widetilde{u}^k)^TF(\widetilde{u}^k)\geq\frac{1}{2}\Big(\|v^{k+1}-v'\|^2_{H^k}-\|v^k-v'\|^2_{H^k}\Big)+\frac{1}{2}\|v^k-\widetilde{v}^k\|^2_{G^k}.
%\end{equation}
%We can obtain the following theorem.
\begin{theorem}
	Suppose that $f$ is only convex and $D^k= D_0/\beta^k$ with $D_0 \succeq0$. Then $\{v^{k+1}\}$ generated by GPALM \eqref{GPALM} satisfies the convergence condition \eqref{V12} with $r^k=\beta^k$, $\sigma=0$, $\varTheta^k\equiv0$ and
	$H^k_0\equiv\begin{pmatrix}
	D_0&0\\
	0&\frac{1}{\gamma}I_l
	\end{pmatrix}.
	$
\end{theorem}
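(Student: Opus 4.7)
The plan is to verify condition \eqref{V12} by direct substitution, exploiting the fact that the prescribed scaling $D^k = D_0/\beta^k$ is designed precisely to absorb the factor $\beta^k$ in $r^k$ and render the target matrix $H_0^k$ constant. So I would open by recalling, from the excerpt just above the theorem, that under GPALM one has
$$H^k=\begin{pmatrix} D^k & 0 \\ 0 & \frac{1}{\gamma\beta^k}I_l \end{pmatrix},\qquad G^k=\begin{pmatrix} D^k & 0 \\ 0 & \frac{2-\gamma}{\beta^k}I_l \end{pmatrix},$$
and that these matrices already satisfy \eqref{V9}--\eqref{V10}, so all that remains is checking \eqref{V12}.

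Next, substituting $D^k = D_0/\beta^k$ I would compute the two rescaled matrices
$$\beta^k H^k = \begin{pmatrix} D_0 & 0 \\ 0 & \frac{1}{\gamma}I_l \end{pmatrix} = H_0,\qquad \beta^k G^k = \begin{pmatrix} D_0 & 0 \\ 0 & (2-\gamma)I_l \end{pmatrix}.$$
The first equality shows that $\beta^k H^k$ is independent of $k$ and equal to the proposed $H_0^k \equiv H_0$. The second shows that $\beta^k G^k \succeq 0$, since $D_0 \succeq 0$ by hypothesis and $\gamma \in (0,2]$ by the definition of GPALM.

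With these two facts in hand, condition \eqref{V12} becomes, after multiplying by $r^k = \beta^k$ and inserting $\sigma=0$, $\Theta^k \equiv 0$, $H_0^{k+1}=H_0^k=H_0$:
$$\|v^{k+1}-v'\|^2_{H_0}-\|v^k-v'\|^2_{H_0}+\|v^k-\widetilde{v}^k\|^2_{\beta^k G^k}\ \geq\ \|v^{k+1}-v'\|^2_{H_0}-\|v^k-v'\|^2_{H_0},$$
which reduces to the trivial inequality $\|v^k-\widetilde{v}^k\|^2_{\beta^k G^k} \geq 0$. This is the whole argument; there is no real obstacle, only the observation that the scaling $D^k = D_0/\beta^k$ was chosen so that the $\beta^k$ factors cancel exactly in both the $H^k$ term and the $G^k$ term, reducing \eqref{V12} to positive semidefiniteness of the constant matrix $\mathrm{diag}(D_0,(2-\gamma)I_l)$.
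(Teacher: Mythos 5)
Your proposal is correct and follows essentially the same route as the paper: both verify \eqref{V12} by multiplying through by $r^k=\beta^k$, observing that $\beta^k H^k$ equals the constant matrix $H_0^k$ because of the scaling $D^k=D_0/\beta^k$, and discarding the nonnegative term $\|v^k-\widetilde{v}^k\|^2_{\beta^k G^k}$, which is nonnegative since $D_0\succeq 0$ and $\gamma\in(0,2]$. The paper merely phrases this as a chain of inequalities starting from \eqref{V11}, while you isolate the verification of \eqref{V12} directly; the content is identical.
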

\begin{proof}
	According to Lemma \ref{L4},
	\eqref{V11} holds.
	Multiplying both sides of \eqref{V11} (with $\sigma=0$)  by $\beta^k$ and then ignoring the nonnegative term	yields that
	\begin{equation}\nonumber
	\begin{aligned} &\beta^k\left(f(x)-f(\widetilde{x}^k)+(u-\widetilde{u}^k)^TF(\widetilde{u}^k)\right)\\
	\geq&\frac{\beta^k}{2}\left(\|v^{k+1}-v\|^2_{H^k}-\|v^k-v\|^2_{H^k}+\|v^k-\widetilde{v}^k\|^2_{G^k}
	\right)\\
	\geq&\frac{1}{2}\left(\|v^{k+1}-v\|^2_{H^{k+1}_0}-\|v^k-v\|^2_{H^k_0}\right).
	\end{aligned}
	\end{equation}
	Setting $x=x'$, $u=u'$ and $v=v'$ in the above inequality completes the proof.~
\end{proof}

Next, we consider the possible indefinite case $D^k\not\succeq 0$. For $\tau\in[\frac{2+\gamma}{4},1]$, we define
\begin{eqnarray}
D^k &:=&\tau r\beta^kI_n-\beta^k A^TA=D^k_1-(1-\tau)\beta^kA^TA,\label{V15} \\
D^k_1&:=& \tau r\beta^kI_n-\tau\beta^k A^TA.\label{V16}
\end{eqnarray}
%Then it holds that
%$$
%D^k=D^k_1-(1-\tau)\beta^kA^TA.
%$$
He et al. \cite{he2020optimal} established $O(1/K)$ ergodic convergence rate for  GPALM \eqref{GPALM} with a fixed $\beta^k$ in the case $D^k\not\succeq 0$. As an extension,   we can obtain $O(1/K^2)$ ergodic convergence rate under the assumption that $f$ is $\sigma~(>0)$-strongly convex.
Setting $x=x'$, $u=u'$, $v=v'$, $z^k=x^{k+1}$ and $z=z'=x'$ in Lemma \ref{L4} yields that
\begin{equation}\label{V38}
\begin{aligned}
&f(x')-f(\widetilde{x}^k)+(u'-\widetilde{u}^k)^TF(\widetilde{u}^k)\\
\geq&\frac{1}{2}\left(\|v^{k+1}-v'\|^2_{H^k}+\sigma\|{x}^{k+1}-x'\|^2-\|v^k-v'\|^2_{H^k}\right)
+\frac{1}{2}\|v^k-\widetilde{v}^k\|^2_{G^k}.
\end{aligned}
\end{equation}
Based on the definition of $G^k$, \eqref{V17} and \eqref{V15}, we have
\begin{equation}\label{V22}
\begin{aligned}
&	\|v^k-\widetilde{v}^k\|^2_{G^k}=\|x^k-x^{k+1}\|^2_{D^k}+\frac{2-\gamma}{\beta^k}\|\lambda^k-\widetilde{\lambda}^{k}\|^2\\=&\|x^k-x^{k+1}\|^2_{D^k_1}+(2-\gamma)\beta^k\|Ax^{k+1}-b\|^2_{}
-(1-\tau)\beta^k\|x^k-x^{k+1}\|^2_{A^TA}\\=&\|x^k-x^{k+1}\|^2_{D^k_1}+(2-\gamma)\beta^k\|x^{k+1}-x'\|^2_{A^TA}
-(1-\tau)\beta^k\|x^k-x^{k+1}\|^2_{A^TA}.
\end{aligned}
\end{equation}
To show  Theorem \ref{main}, it suffices to prove the following result.
\begin{theorem}Suppose that  $f$ is $\sigma~(>0)$-strongly convex.
	Let $D^k$ be given in \eqref{V15}, $r>\|A\|^2$ and \begin{equation}
	\label{V25}
	\beta^k(\tau r\beta^k+\sigma)\geq\tau r(\beta^{k+1})^2,~ \beta^{k+1}\ge\beta^k.
	\end{equation}
	Then $\{v^{k+1}\}$ generated by GPALM \eqref{GPALM} satisfies the convergence condition \eqref{V12} with  $H_0^{k}=\begin{pmatrix}
	\beta^kD_1^k+(1-\tau)(\beta^k)^2A^TA&0\\
	0&\frac{1}{\gamma} I_l
	\end{pmatrix},$
	$r^k=\beta^k$, $R=I_n$,  $z'=x'$, ${z}^k=x^{k+1}$ and $\varTheta^k=0$.
\end{theorem}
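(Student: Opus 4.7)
The plan is to apply Lemma \ref{L4} with the prescribed substitutions $z^k=x^{k+1}$, $z=z'=x'$, $R=I_n$, and $r^k=\beta^k$, then multiply the resulting bound by $\beta^k$ and verify \eqref{V12} term by term. Because the dual block of $H^k$ is $\tfrac{1}{\gamma\beta^k}I_l$ while that of $H_0^k$ is $\tfrac{1}{\gamma}I_l$, scaling by $\beta^k$ makes the $\lambda$-contributions on the two sides cancel identically, and the hypothesis $\varTheta^k\equiv 0$ removes any cumulative tail. The task therefore reduces to a pure $x$-space inequality.

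For the $x$-block I will use identity \eqref{V22} to expand $\|v^k-\widetilde{v}^k\|^2_{G^k}$, trading the dual residual for $(2-\gamma)\beta^k\|x^{k+1}-x'\|^2_{A^TA}$ via $Ax'=b$, and I will split $\beta^k D^k=\beta^k D_1^k-(1-\tau)(\beta^k)^2A^TA$ throughout so as to isolate the positive part $D_1^k=\tau\beta^k(rI_n-A^TA)\succeq 0$ (valid because $r>\|A\|^2$). The difference between the two sides of \eqref{V12} then collects into three quadratic forms, $\|x^{k+1}-x'\|^2_{M_1}+\|x^k-x'\|^2_{M_2}+\|x^k-x^{k+1}\|^2_{M_3}$, and the bulk of the proof is a positivity check on these coefficient matrices. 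A direct computation gives $M_2=2(1-\tau)(\beta^k)^2A^TA\succeq 0$ (since $\tau\leq 1$), while $M_1$ splits into an identity piece $[\tau r(\beta^k)^2+\beta^k\sigma-\tau r(\beta^{k+1})^2]I_n$, nonnegative exactly by the dynamic step-size hypothesis \eqref{V25}, and an $A^TA$ piece with coefficient $(1-\gamma)(\beta^k)^2+(2\tau-1)(\beta^{k+1})^2\geq(2\tau-\gamma)(\beta^k)^2\geq 0$, using $\tau\geq(2+\gamma)/4$ and $\beta^{k+1}\geq\beta^k$.

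The main obstacle I anticipate is the $M_3$ block, which works out to $(\beta^k)^2(\tau rI_n-A^TA)$: since $\tau$ may be as small as $(2+\gamma)/4<1$, the hypothesis $r>\|A\|^2$ does not immediately force $M_3\succeq 0$ in isolation, so $M_3$ has to be absorbed jointly with $M_1$ and $M_2$. I will do this by writing $\|x^k-x^{k+1}\|^2_{M_3}=\|(x^k-x')-(x^{k+1}-x')\|^2_{M_3}$, expanding the cross-term, and showing that the resulting coupled quadratic form in $(x^k-x',x^{k+1}-x')$ is positive semidefinite; the strong-convexity slack $\beta^k\sigma I$ sitting inside $M_1$, the $A^TA$ slack inside $M_2$, and the admissible range $\tau\in[(2+\gamma)/4,1]$ combined with the step-size condition \eqref{V25} are precisely what is needed to close this Schur-complement gap. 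Once the combined form is shown to be $\succeq 0$, condition \eqref{V12} holds with the stated $r^k$, $R$, $z^k$, $z'$, $\varTheta^k$, and $H_0^k$, and Theorem \ref{main} then delivers the $O(1/K^2)$ ergodic rate.
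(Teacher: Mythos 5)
Your proposal is correct and follows essentially the same route as the paper's proof: the $\lambda$-blocks cancel after scaling by $\beta^k$, the $x$-space residual collects into exactly the three quadratic forms you compute (your $M_1$, $M_2$, $M_3$ agree with the paper's \eqref{3.2.2}), and positivity of $M_1$ rests on \eqref{V25} for the $I_n$ part and on $\tau\in[\tfrac{2+\gamma}{4},1]$, $\beta^{k+1}\geq\beta^k$ for the $A^TA$ part, as you say. The only (cosmetic) difference is in absorbing the indefinite $M_3=(\beta^k)^2(\tau rI_n-A^TA)$: where you propose a Schur-complement check on the coupled form in $(x^k-x',x^{k+1}-x')$, the paper reaches the same cancellation more directly via $-(1-\tau)\beta^k\|x^k-x^{k+1}\|^2_{A^TA}\geq-2(1-\tau)\beta^k\bigl(\|x^k-x'\|^2_{A^TA}+\|x^{k+1}-x'\|^2_{A^TA}\bigr)$ in \eqref{V31}, which zeroes out $M_2$, leaves the positive remainder $\beta^k\|x^k-x^{k+1}\|^2_{D_1^k}$, and reduces the $A^TA$ coefficient of $M_1$ to $(4\tau-\gamma-2)(\beta^k)^2\geq 0$ — your block-PSD argument does close, but this elementary inequality is the cleaner way to finish.
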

\begin{proof}
	Let $$e^k:=\|v^{k+1}-v'\|^2_{H^k}+\sigma\|{x}^{k+1}-x'\|^2-\|v^k-v'\|^2_{H^k}+\|v^k-\widetilde{v}^k\|^2_{G^k}.$$
	According to the structure of $H^k$ and $G^k$ and \eqref{V15}, we obtain
	\begin{equation}
	\begin{aligned}
	e^k&=\|x^{k+1}-x'\|^2_{D_1^k- (1-\tau)\beta^kA^TA+\sigma I_n }+\frac{1}{\gamma\beta^k}\|\lambda^{k+1}-\lambda\|^2\\&-\left(
	\|x^{k}-x'\|^2_{D_1^k- (1-\tau)\beta^kA^TA}+\frac{1}{\gamma\beta^k}\|\lambda^{k}-\lambda\|^2
	\right)+ \|v^k-\widetilde{v}^k\|^2_{G^k}.
	\end{aligned}\label{ER}
	\end{equation}
	Substituting the inequality
	$$-(1-\tau)\beta^k\|x^k-x^{k+1}\|^2_{A^TA}\geq-2(1-\tau)\beta^k\left[\|x^k-x'\|^2_{A^TA}+\|x^{k+1}-x'\|^2_{A^TA}\right]$$
	into \eqref{V22} yields that
	\begin{equation}\label{V31}
	\begin{aligned}
	\|v^k-\widetilde{v}^k\|^2_{G^k}\geq&\|x^k-x^{k+1}\|^2_{D^k_1}+(2\tau-\gamma)\beta^k\|x^{k+1}-x'\|^2_{A^TA}\\&
	-2(1-\tau)\beta^k\|x^k-x'\|^2_{A^TA}.
	\end{aligned}
	\end{equation}
	By combining \eqref{ER} and \eqref{V31}, we obtain
	\begin{equation}\label{3.2.2}
	\begin{aligned}
	e^k\geq \bar{e}^k:=&\|x^{k+1}-x'\|^2_{D_1^k+(1-\tau)\beta^kA^TA+\sigma I_n}+\frac{1}{\gamma\beta^k}\|\lambda^{k+1}-\lambda\|^2\\&-\left(
	\|x^{k}-x'\|^2_{D_1^k+(1-\tau)\beta^kA^TA}+\frac{1}{\gamma\beta^k}\|\lambda^{k}-\lambda\|^2
	\right)\\&+\|x^k-x^{k+1}\|^2_{D^k_1}+(4\tau-\gamma-2)\beta^k\|x^{k+1}-x'\|^2_{A^TA}.
	\end{aligned}
	\end{equation}
	Note that $D_1^k\succ0$ with $r>\|A\|^2$. For any $\tau\in[\frac{2+\gamma}{4},1]$ with $\gamma\in(0,2]$, under the assumption \eqref{V25},
	we have
	\begin{eqnarray}
	&&\beta^k[D_1^k+(1-\tau)\beta^kA^TA+\sigma I_n]\nonumber\\&\overset{\eqref{V16}}{=}&\beta^k[\tau r\beta^kI_n+(1-2\tau)\beta^kA^TA+\sigma I_n]\nonumber\\&\overset{\beta^{k+1}\geq\beta^k}{\succeq}&
	\beta^k[\tau r\beta^kI_n+\sigma I_n]+(1-2\tau)(\beta^{k+1})^2A^TA\label{V24}\\&\overset{\eqref{V25}}{\succeq}&\tau r(\beta^{k+1})^2I_n+(1-2\tau)(\beta^{k+1})^2A^TA\nonumber\\&=&
	\beta^{k+1}[D_1^{k+1}+(1-\tau)\beta^{k+1}A^TA].\nonumber
	\end{eqnarray}
	% Multiplying both sides of \eqref{3.2.2}  by $\beta^k$ and combining \eqref{V24} yields that
	It follows from \eqref{3.2.2}  and  \eqref{V24}  that
		\begin{equation}\label{V37}
		\begin{aligned}
		\beta^ke^k\geq \beta^k\bar{e}^k
		\overset{\eqref{V24}}{\geq}&\underbrace{\|x^{k+1}-x'\|^2_{\beta^{k+1}D_1^{k+1}+(1-\tau)(\beta^{k+1})^2A^TA }+\frac{1}{\gamma}\|\lambda^{k+1}-\lambda\|^2}_{\|v^{k+1}-v'\|^2_{H_0^{k+1}}}\\
		&-\underbrace{\left(	\|x^{k}-x'\|^2_{\beta^kD_1^k+(1-\tau)(\beta^k)^2A^TA}+\frac{1}{\gamma}\|\lambda^{k}-\lambda\|^2	\right)}_{\|v^{k}-v'\|^2_{H_0^{k}}}\\&+\beta^k\left(\|x^k-x^{k+1}\|^2_{D^k_1}+(4\tau-\gamma-2)\beta^k\|x^{k+1}-x'\|^2_{A^TA}\right).
		\end{aligned}
		\end{equation}
	%	Hence we have $$\beta^ke\geq\|u^{k+1}-u'\|^2_{H_0^{k+1}}-\|u^k-u'\|^2_{H_0^k},$$
	%	Then we obtain the conclusion.
	The proof is complete by
	ignoring some nonnegative terms.
\end{proof}
\begin{rem}\label{rem1}
	The condition \eqref{V25} is satisfied if we set $\beta^k=\delta k$
	for some  $\delta>0$.	
\end{rem}

\begin{rem}\label{rem1:1}
	Adding up \eqref{V37} from $k=0$ to $K$ with the special setting $v'=v^*$, we obtain
	\begin{equation}\nonumber
	\begin{aligned}
	0&\geq\sum_{k=0}^{K}\beta^k\left( L(x^*,\widetilde{\lambda}^k)-L(\widetilde{x}^k,\lambda^*)
	\right)\\&	=\sum_{k=0}^{K}\beta^k\left( f(x^*)-f(\widetilde{x}^k)+(u^*-\widetilde{u}^k)^TF(\widetilde{u}^k)\right)\\& 	\overset{\eqref{V38}}{\geq} \frac{1}{2}\left(
	\|v^{K+1}-v^*\|^2_{H_0^{K+1}}-\|v^0-v^*\|^2_{H_0^0}+\sum_{k=0}^{K}\beta^k\|x^k-x^{k+1}\|^2_{D^k_1}\right).
	\end{aligned}
	\end{equation}
	Note that $\beta^k\|x^k-x^{k+1}\|^2_{D^k_1}=\tau(\beta^k)^2\|x^k-x^{k+1}\|^2_{rI_n-A^TA}$ by the definition of $D_1^k$. It follows from the condition \eqref{V25} that $\beta^k=O(k)$ for $k\rightarrow\infty$. Then we obtain $O(1/k^3)$ convergence rate of $\min_{i}\|x^i-x^{i+1}\|^2$ ($i=0,\dots,k$).
\end{rem}

\subsection{Algorithms for solving \eqref{P2}}
We consider solving \eqref{P2} under the $\sigma~(\geq0)$-strongly convex assumption of $f_2$.
We define the artificial vector $\widetilde{v}^k=(\widetilde{x}_2^k,\widetilde{\lambda}^k)$ with
\begin{equation}
\label{V19}
\widetilde{x}_1^k=x_1^{k+1},~
\widetilde{x}_2^k=x_2^{k+1},~{\rm and}~\widetilde{\lambda}^k=\lambda^k-\beta^k(A_1\widetilde{x}^k_1+A_2{x}^k_2-b).\end{equation}
\subsubsection{ADMM}
The iterative scheme of  ADMM for solving \eqref{P2} reads as:
\begin{eqnarray}\label{ADMM}
({\rm ADMM})~
\begin{cases}
x_1^{k+1}\in\arg\min\limits_{x_1}L_{\beta^k}(x_1,x_2^k,\lambda^k),\\
x_2^{k+1}\in\arg\min\limits_{x_2}L_{\beta^k}(x_1^{k+1},x_2,\lambda^k),\\
\lambda^{k+1}=\lambda^k-\gamma\beta^k(A_1x^{k+1}_1+A_2x^{k+1}_2-b).
\end{cases}
\end{eqnarray}
For the special case $\gamma=1$, Xu \cite{xu2017accelerated} gives $O(1/K^2)$ convergence rate under the strongly convex assumption. Next we consider the general case $\gamma\in (0,(1+\sqrt{5})/2]$  by the prediction-correction framework.

From the optimality condition of $x_2$-subproblem in ADMM \eqref{ADMM} and the $\sigma$-strongly convex assumption of $f_2$, we have
\begin{equation}\label{3.2.4}
\begin{aligned}
&f_2(x_2)-f_2(x_2^{k+1})+(x_2-x_2^{k+1})^T[
-A_2^T\lambda^k\\+&\beta^kA_2^T(A_1x_1^{k+1}+A_2x_2^{k+1}-b)]\geq\frac{\sigma}{2}\|x_2^{k+1}-x_2\|^2,~\forall x_2.
\end{aligned}
\end{equation}
Using an analysis similar to that in \cite[Lemma 3.1]{he2016convergence}, we can write ADMM \eqref{ADMM} in the following framework:
\begin{framed}
	\noindent{\bf[Prediction step.]} With a given $v^k$, find $\widetilde{u}^k$ such that
	\begin{eqnarray*} &&f(x)-f(\widetilde{x}^k)+(u-\widetilde{u}^k)^TF(\widetilde{u}^k)\\
		\geq&&(v-\widetilde{v}^k)^TQ^k(v^k-\widetilde{v}^k)+\frac{\sigma}{2}\|\widetilde{x}_2^k-x_2\|^2, ~ \forall u,
	\end{eqnarray*}
	where $Q^k=\begin{pmatrix}
	\beta^kA_2^TA_2&0\\
	-A_2&\frac{1}{\beta^k}I_l
	\end{pmatrix}.$	
\end{framed}
By \eqref{V19}, we can rewrite the dual update step in ADMM as
\begin{equation}
\label{V26}
\lambda^{k+1}=\lambda^k-\gamma\beta^k(A_1x^{k+1}_1+A_2x^{k+1}_2-b)=\lambda^k-\gamma(\lambda^k-\widetilde{\lambda}^k)+\gamma\beta^kA_2(x_2^k-\widetilde{x}_2^k).\end{equation}
Hence, $v^{k+1}$ generated by ADMM can be viewed as a correction from $\widetilde{v}^k$:
\begin{framed}
	\noindent {\bf [Correction step.]} Update $v^{k+1}$ by
	\begin{eqnarray}
	&&v^{k+1}=v^k-M^k(v^k-\widetilde{v}^k),
	\label{V28}
	~
	M^k=\begin{pmatrix}
	I_{n_2}&0\\
	-\gamma\beta^kA_2&\gamma I_l
	\end{pmatrix}.
	\end{eqnarray}
\end{framed}
Then ADMM \eqref{ADMM}  satisfies \eqref{V7}-\eqref{V8}.
We define
\begin{eqnarray*}
	H^k&=\begin{pmatrix}
		\beta^kA_2^TA_2&0\\
		0&\frac{1}{\gamma \beta^k}I_l
	\end{pmatrix},
	~	G^k=\begin{pmatrix}
		(1-\gamma)\beta^kA_2^TA_2&-(1-\gamma)A_2^T\\
		-(1-\gamma)A_2&\frac{2-\gamma}{\beta^k}I_l
	\end{pmatrix}.
\end{eqnarray*}
Then $H^k$ and $G^k$ satisfy \eqref{V9}-\eqref{V10}.
Based on Lemma \ref{L4},  we have
\begin{equation}\label{3.2.1}
\begin{aligned}
&f(x)-f(\widetilde{x}^k)+(u-\widetilde{u}^k)^TF(\widetilde{u}^k)\\
\geq&\frac{1}{2}\left(\|v^{k+1}-v\|^2_{H^k}+\sigma\|{x}_2^{k+1}-x_2\|^2-\|v^k-v\|^2_{H^k}\right)
+\frac{1}{2}\|v^k-\widetilde{v}^k\|^2_{G^k}.
\end{aligned}
\end{equation}
According to the structure of $G^k$,  we have
\begin{eqnarray}\nonumber
\|v^k-\widetilde{v}^k\|^2_{G^k}=&&(1-\gamma)\beta^k\|x_2^k-x_2^{k+1}\|^2_{A_2^TA_2}+\frac{2-\gamma}{\beta^k}\|\lambda^k-\widetilde{\lambda}^k\|^2\\&&-2(1-\gamma)(x_2^k-x_2^{k+1})^TA_2^T(\lambda^k-\widetilde{\lambda}^k).\label{V30}
\end{eqnarray}
Based on \eqref{V26}, \eqref{V30} and $\lambda^{k+1}$ \eqref{ADMM}, we obtain
	\begin{equation}
	\label{V27}
	\begin{aligned}
	&\|v^k-\widetilde{v}^k\|^2_{G^k}\\=&\beta^k\|A_2(x_2^k-x_2^{k+1})\|^2+\frac{2-\gamma}{\gamma^2\beta^k}\|\lambda^k-\lambda^{k+1}\|^2+\frac{2}{\gamma}(x_2^k-x_2^{k+1})^TA_2^T(\lambda^k-\lambda^{k+1})\\=&\beta^k\|A_2(x_2^k-x_2^{k+1})\|^2+(2-\gamma)\beta^k\|Ax^{k+1}-b\|^2+2\beta^k(x_2^k-x_2^{k+1})^TA_2^T(Ax^{k+1}-b).
	\end{aligned}
	\end{equation}
Then,  the following result immediately guarantees  Theorem \ref{main}.
\begin{theorem}
	Suppose that  $f_2$ is $\sigma~(>0)$-strongly convex. For $\gamma\in (0,(1+\sqrt{5})/2]$ and  $\{\beta^k\}$ satisfying
		\begin{equation}\label{C14}
		\beta^k\left(\beta^k+\frac {\sigma}{\sigma_{\max}(A^T_2A_2)}\right)\geq(\beta^{k+1})^2, ~ \frac{ (\beta^k)^3 \sigma_{\max}(A^T_2A_2)}{\beta^k \sigma_{\max}(A^T_2A_2)+ \sigma } \leq(\beta^{k-1})^2,~\beta^{k+1}\geq\beta^k,
		\end{equation}
	$\{v^{k+1}\}$ generated by ADMM \eqref{ADMM} satisfies the convergence condition \eqref{V12} with
	$
	H_0^k=\begin{pmatrix}
	(\beta^k)^2A_2^TA_2&0\\
	0&\frac{1}{\gamma}I_l
	\end{pmatrix},$
	$r^k=\beta^k$, $R=I_{n_2}$,  $z'=x_2'$, ${z}^k=x_2^{k+1}$ and  $\varTheta^k=(1-\gamma)^2(\beta^{k-1})^2\|Ax^{k}-b\|^2$.
\end{theorem}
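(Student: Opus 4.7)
The proof parallels that of the preceding GPALM theorem, the new difficulty being that $G^k$ is no longer block-diagonal and is indefinite when $\gamma>1$, so a cross term must be carried through the estimate. I would start from the specialization of Lemma~\ref{L4} recorded in~\eqref{3.2.1} with $x=x'$, $u=u'$, $v=v'$, $z^k=x_2^{k+1}$, $z'=x_2'$, $R=I_{n_2}$, and multiply both sides by $r^k=\beta^k$ to align them with the LHS of~\eqref{V12}. The dual block of $\beta^k H^k$ equals $\gamma^{-1}I_l$, matching the dual blocks of both $H_0^{k+1}$ and $H_0^k$, so the $\lambda$-terms line up automatically. For the $x_2$-primal block on the $v^{k+1}$-side, the gap between $(\beta^k)^2 A_2^TA_2$ and $(\beta^{k+1})^2 A_2^TA_2$ is bridged by using $\beta^k\sigma\|x_2^{k+1}-x_2'\|^2\geq \frac{\beta^k\sigma}{\sigma_{\max}(A_2^TA_2)}\|A_2(x_2^{k+1}-x_2')\|^2$ followed by the first inequality in~\eqref{C14}, namely $\beta^k[\beta^k+\sigma/\sigma_{\max}(A_2^TA_2)]\geq(\beta^{k+1})^2$; the $v^k$-side primal block matches directly.

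The heart of the argument is to collapse $\beta^k\|v^k-\widetilde v^k\|^2_{G^k}$. Expanding via~\eqref{V27} and invoking the two ADMM identities $\beta^k A_2(x_2^k-x_2^{k+1})+\beta^k(Ax^{k+1}-b)=\lambda^k-\widetilde\lambda^k$ (from~\eqref{V19}) and $\beta^k(Ax^{k+1}-b)=\gamma^{-1}(\lambda^k-\lambda^{k+1})$ (from the dual update), completing the square yields $\beta^k\|v^k-\widetilde v^k\|^2_{G^k}=\|\lambda^k-\widetilde\lambda^k\|^2+\frac{1-\gamma}{\gamma^2}\|\lambda^{k+1}-\lambda^k\|^2$. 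Meanwhile $\varTheta^{k+1}-\varTheta^k=\frac{(1-\gamma)^2}{\gamma^2}\bigl(\|\lambda^{k+1}-\lambda^k\|^2-\|\lambda^k-\lambda^{k-1}\|^2\bigr)$. After the $H_0$ cancellations effected in the previous paragraph, verifying~\eqref{V12} reduces to the pointwise inequality $\|\lambda^k-\widetilde\lambda^k\|^2+\frac{1-\gamma}{\gamma}\|\lambda^{k+1}-\lambda^k\|^2+\frac{(1-\gamma)^2}{\gamma^2}\|\lambda^k-\lambda^{k-1}\|^2\geq 0$.

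For $\gamma\in(0,1]$ all three summands are individually nonnegative and this final inequality is trivial. The main obstacle will be the regime $\gamma\in(1,(1+\sqrt{5})/2]$, where the middle summand is negative and must be absorbed by the other two. My plan here is to apply the $\sigma$-strong convexity of $f_2$ to the optimality conditions of the $x_2$-subproblems at the consecutive iterations $k-1$ and $k$, extracting a relation that links $\|A_2(x_2^k-x_2^{k+1})\|^2$, $\|x_2^{k+1}-x_2^k\|^2$, and the differences $\widetilde\lambda^k-\widetilde\lambda^{k-1}$; the second inequality of~\eqref{C14} then controls the ratio $(\beta^k)^2/(\beta^{k-1})^2$. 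The golden-ratio threshold $(1+\sqrt 5)/2$ is precisely the positive root of $\gamma^2-\gamma-1=0$, and this value is exactly what is needed to close the resulting quadratic estimate in $\|\lambda^{k+1}-\lambda^k\|$, $\|\lambda^k-\widetilde\lambda^k\|$, and $\|\lambda^k-\lambda^{k-1}\|$. Since $\varTheta^k\geq 0$ automatically, assembling these pieces delivers~\eqref{V12} with the stated $H_0^k$, $r^k=\beta^k$, $R=I_{n_2}$, $z'=x_2'$, $z^k=x_2^{k+1}$, and $\varTheta^k=(1-\gamma)^2(\beta^{k-1})^2\|Ax^k-b\|^2$.
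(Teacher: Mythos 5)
Your reduction is correct and follows essentially the paper's own route: the first inequality of \eqref{C14} together with $\sigma\|x_2^{k+1}-x_2'\|^2\geq \sigma\|A_2(x_2^{k+1}-x_2')\|^2/\sigma_{\max}(A_2^TA_2)$ handles the $H^k\to H_0^{k+1}$ bridging exactly as in the paper's inequality \eqref{C17}, and your completing-the-square identity $\beta^k\|v^k-\widetilde v^k\|^2_{G^k}=\|\lambda^k-\widetilde\lambda^k\|^2+\frac{1-\gamma}{\gamma^2}\|\lambda^{k+1}-\lambda^k\|^2$ is a correct (and arguably cleaner) restatement of \eqref{V27}. Your observation that the case $\gamma\in(0,1]$ then becomes trivial is also right, and is not made explicit in the paper.

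The gap is that the regime $\gamma\in(1,(1+\sqrt 5)/2]$ --- the only case in which the theorem has real content, and the reason strong convexity and the second inequality of \eqref{C14} appear at all --- is left as a plan rather than a proof. The paper's actual work there is: write the $x_2$-optimality conditions at iterations $k$ and $k-1$ (\eqref{3.2.4} and \eqref{3.2.5}), combine them to get \eqref{C13}, which lower-bounds the cross term $\beta^k(x_2^k-x_2^{k+1})^TA_2^T(Ax^{k+1}-b)$ by $(1-\gamma)\beta^{k-1}(x_2^k-x_2^{k+1})^TA_2^T(Ax^{k}-b)+\sigma'\|A_2(x_2^k-x_2^{k+1})\|^2$ with $\sigma'=\sigma/\sigma_{\max}(A_2^TA_2)$; then apply the weighted Cauchy--Schwarz inequality \eqref{C10} with the specific weight $(\beta^k+\sigma')/2$, chosen so that the $\|A_2(x_2^k-x_2^{k+1})\|^2$ contributions combine into a nonnegative remainder that can be dropped, leaving the coefficient $(1+\gamma-\gamma^2)\beta^k$ on $\|Ax^{k+1}-b\|^2$ (nonnegative precisely because $\gamma\leq(1+\sqrt5)/2$) plus a telescoping pair that matches $\varTheta^{k+1}-\varTheta^k$ once the second inequality of \eqref{C14} converts the coefficient $(\beta^k)^3\sigma_{\max}(A_2^TA_2)/(\beta^k\sigma_{\max}(A_2^TA_2)+\sigma)$ of $\|Ax^{k}-b\|^2$ into $(\beta^{k-1})^2$. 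You name these ingredients but do not carry out the estimate; moreover the relation you propose to extract, ``linking $\|A_2(x_2^k-x_2^{k+1})\|^2$, $\|x_2^{k+1}-x_2^k\|^2$ and $\widetilde\lambda^k-\widetilde\lambda^{k-1}$,'' is not quite the right one, since the consecutive-iteration inequality actually involves the residuals $Ax^k-b$ and $Ax^{k+1}-b$. Asserting that the golden ratio ``is exactly what is needed to close the resulting quadratic estimate'' states the conclusion rather than proving it, so the decisive case of the theorem is missing from your argument as written.
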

\begin{proof}
	According to \eqref{3.2.4}, for any $x_2$, it holds that
	\begin{equation}\label{3.2.5} f_2(x_2)-f_2(x_2^{k})+(x_2-x_2^{k})^T[-A_2^T\lambda^{k-1}+\beta^{k-1}A_2^T(Ax^k-b)]\geq\frac{\sigma}{2}\|x_2^{k}-x_2\|^2.
	\end{equation}
	Putting \eqref{3.2.4} (with $x_2=x_2^k$) and \eqref{3.2.5} (with $x_2=x_2^{k+1}$) together  leads to
	\begin{equation}
	\label{V29}
	\begin{aligned}
	(x_2^k-x_2^{k+1})^TA_2^T[
	\lambda^{k-1}&-\lambda^k-\beta^{k-1}(Ax^{k}-b)+
	\beta^{k}(Ax^{k+1}-b)]\\&\geq{\sigma}\|x_2^{k}-x_2^{k+1}\|^2
	.\end{aligned}
	\end{equation}
	According to the dual update rule, \eqref{V29} is equivalent to
	$$
	\begin{aligned}
	(x_2^k-x_2^{k+1})^TA_2^T[\beta^{k}(Ax^{k+1}-b)-
	(1-\gamma)\beta^{k-1}(Ax^{k}-b)]\geq{\sigma}\|x_2^{k}-x_2^{k+1}\|^2,
	\end{aligned}
	$$
	which can be rearranged as
	\begin{equation}\label{C13}
	\begin{aligned}
	&\beta^k(x_2^k-x_2^{k+1})^TA_2^T(Ax^{k+1}-b)\\&\geq(1-\gamma)\beta^{k-1}(x_2^k-x_2^{k+1})^TA_2^T(Ax^{k}-b)+{\sigma}\|x_2^{k}-x_2^{k+1}\|^2\\&\geq(1-\gamma)\beta^{k-1}(x_2^k-x_2^{k+1})^TA_2^T(Ax^{k}-b)+\sigma'\|A_2(x_2^{k}-x_2^{k+1})\|^2
	,
	\end{aligned}
	\end{equation}
	where $\sigma'= \sigma/\sigma_{\max}(A^T_2A_2)$.
	As $\{\beta^k\}$ is non-decreasing, we have
	\begin{equation}
	\label{C10}
	\begin{aligned}
	&(1-\gamma)\beta^{k-1}(x_2^k-x_2^{k+1})^TA_2^T(Ax^{k}-b)\\&\geq-\frac {(\beta^k+\sigma')}{2}\|A_2(x_2^k-x_2^{k+1})\|^2-\frac{(1-\gamma)^2(\beta^{k-1})^2}{2(\beta^k+\sigma')}\|Ax^{k}-b\|^2
	\\&\geq-\frac {(\beta^k+\sigma')}{2}\|A_2(x_2^k-x_2^{k+1})\|^2-\frac{(1-\gamma)^2(\beta^k)^2}{2(\beta^k+\sigma')}\|Ax^{k}-b\|^2.
	\end{aligned}
	\end{equation}
	Note that  \begin{equation}\label{C12}
	\begin{aligned}
	&-\frac{(\beta^k)^2}{\beta^k+\sigma'}\|Ax^{k}-b\|^2=-\frac{(\beta^k)(\beta^k+\sigma'-\sigma')}{\beta^k+\sigma'}\|Ax^{k}-b\|^2\\=&\beta^k\|Ax^{k+1}-b\|^2-(\beta^k-\frac{\sigma'\beta^k}{\beta^k+\sigma'})\|Ax^{k}-b\|^2-\beta^k\|Ax^{k+1}-b\|^2.
	\end{aligned}
	\end{equation}
	According to \eqref{V27}, \eqref{C13}, \eqref{C10} and \eqref{C12}, we obtain
	\begin{equation}
	\begin{aligned}
	\|v^k-\widetilde{v}^k\|^2_{G^k}&\geq
	(1+\gamma-\gamma^2)\beta^k\|Ax^{k+1}-b\|^2\\+&(1-\gamma)^2
	[\beta^k\|Ax^{k+1}-b\|^2-(\beta^k-\frac{\sigma'\beta^k}{\beta^k+\sigma'})\|Ax^{k}-b\|^2].
	\end{aligned}	
	\end{equation}	
	Then, it follows from $\gamma\in (0,\frac{1+\sqrt{5}}{2}]$ and the condition \eqref{C14} that  \begin{equation}\label{C16}
	\beta^k\|v^k-\widetilde{v}^k\|^2_{G^k}\geq(1-\gamma)^2
	[(\beta^k)^2\|Ax^{k+1}-b\|^2-(\beta^{k-1})^2\|Ax^{k}-b\|^2].
	\end{equation}
	Again, using the condition \eqref{C14}, we have \begin{equation}
	\label{C17}
	\begin{aligned}
	&\beta^k\{
	\|v^{k+1}-v'\|^2_{H^k}+\sigma\|{x}_2^{k+1}-x_2'\|^2-\|v^k-v'\|^2_{H^k}
	\\&\geq\|v^{k+1}-v'\|^2_{H^{k+1}_0}-\|v^k-v'\|^2_{H^k_0}.
	\end{aligned}
	\end{equation}
	Combining \eqref{C16} and \eqref{C17}  completes the proof.
\end{proof}

\subsubsection{Linearized ADMM}
We study the   linearized ADMM (LADMM):
\begin{eqnarray}\label{LADMM}
({\rm LADMM})~
\begin{cases}
x_1^{k+1}=\arg\min\limits_{x_1}L_{\beta^k}(x_1,x_2^k,\lambda^k),\\x_2^{k+1}=\arg\min\limits_{x_2}\{L_{\beta^k}(x_1^{k+1},x_2,\lambda^k)+\frac{1}{2}\|x_2-x_2^k\|_{D^k}^2\},\\
\lambda^{k+1}=\lambda^k-\beta^k(A_1x^{k+1}_1+A_2x^{k+1}_2-b),
\end{cases}
\end{eqnarray}
where
\begin{equation}
D^k=\tau r\beta^kI_{n_2}-\beta^kA_2^TA_2. \label{A15}
\end{equation}
Then $D^k=\tau D_1^k+(\tau-1)\beta^kA_2^TA_2$ with $
D_1^k= r\beta^kI_{n_2}-\beta^kA_2^TA_2.
$
The $O(1/K)$ ergodic convergence rate has been established in
\cite{he2020optimally}  when $\tau\in[3/4,1]$,  $\beta^k\equiv\beta$ and $r>\|A_2\|^2$. Note that $D^k$ could be indefinite so that one can expect more efficient numerical performance. The main result in this subsection is to establish  $O(1/K^2)$ ergodic convergence rate under the strongly convex assumption.

Using the optimality condition of the $x_2$-subproblem in LADMM \eqref{LADMM} and the fact that $f_2$ is $\sigma$-strongly convex, we obtain
\begin{equation}\label{B1}
\begin{aligned}
&f_2(x_2)-f_2(x_2^{k+1})+(x_2-x_2^{k+1})^T[
-A_2^T\lambda^k+\beta^kA_2^T(A_1x_1^{k+1}\\
&+A_2x_2^{k+1}-b)+D^k(x_2^{k+1}-x_2^k)]\geq\frac{\sigma}{2}\|x_2^{k+1}-x_2\|^2, ~\forall x_2.
\end{aligned}
\end{equation}
Based on an analysis similar to that in \cite[Section 3]{he2020optimally}, we can write LADMM in the following framework:
\begin{framed}
	\noindent{\bf[Prediction step.]}With a given $v^k$, find $\widetilde{u}^k$ such that
	\begin{eqnarray}
	&&	f(x)-f(\widetilde{x}^k)+(u-\widetilde{u}^k)^TF(\widetilde{u}^k)\nonumber\\
	\geq&&(v-\widetilde{v}^k)^TQ^k(v^k-\widetilde{v}^k)+\frac{\sigma}{2}\|\widetilde{x}_2^k-x_2\|^2,~\forall u, \nonumber
	\end{eqnarray}
	where
	$Q^k=\begin{pmatrix}
	\tau r\beta^kI_{n_2}&0\\
	-A_2&\frac{1}{\beta^k}I_l
	\end{pmatrix}$.
\end{framed}
\begin{framed}
	\noindent{\bf [Correction step.]} Update $v^{k+1}$ by
	\[
	v^{k+1}=v^k-M^k(v^k-\widetilde{v}^k),~M^k=\begin{pmatrix}
	I_{n_2}&0\\
	-\beta^kA_2& I_l
	\end{pmatrix}.
	\]
\end{framed}
Then ADMM LADMM \eqref{LADMM}  satisfies \eqref{V7}-\eqref{V8}.
We define
\begin{eqnarray*}
	H^k&=\begin{pmatrix}
		\tau r\beta^kI_{n_2}&0\\
		0&\frac{1}{ \beta^k}I_l
	\end{pmatrix},~
	G^k=\begin{pmatrix}
		D^k&0\\
		0&\frac{1}{\beta^k}I_l
	\end{pmatrix},
\end{eqnarray*}
which both satisfy \eqref{V9}-\eqref{V10}.
Since  \[
\lambda^{k+1}=\lambda^{k}-\beta^k(A_1x^{k+1}_1+A_2x^{k+1}_2-b)=\lambda^k-(\lambda^k-\widetilde{\lambda}^k)+\beta^kA_2(x_2^k-\widetilde{x}_2^k),
\]
we have \begin{equation}\label{B13}
\begin{aligned}
&\|v^k-\widetilde{v}^k\|^2_{G^k}
=\|x_2^k-\widetilde{x}_2^{k}\|^2_{D^k}+\frac{1}{\beta^k}\|\lambda^k-\widetilde{\lambda}^k\|^2
\\=&\tau r \beta^k\|x_2^k-x_2^{k+1}\|^2+\frac{1}{\beta^k}\|\lambda^k-\lambda^{k+1}\|^2+2(x_2^k-x_2^{k+1})^TA_2^T(\lambda^{k}-\lambda^{k+1}).
\end{aligned}
\end{equation}
%	Similarly to the proof process for getting \eqref{3.1.1}, we have\begin{equation}\label{B5}f(x)-f(\widetilde{x}^k)+(u-\widetilde{u}^k)^TF(\widetilde{u}^k)\geq\frac{1}{2}(\|v^{k+1}-v\|^2_{H^k}+\sigma\|{x}_2^{k+1}-x_2\|^2-\|v^k-v\|^2_{H^k})+\frac{1}{2}\|v^k-\widetilde{v}^k\|^2_{G^k}.\end{equation}
Then  we give the following result.
\begin{theorem}
	Suppose that $f_2$ is $\sigma~(>0)$-strongly convex. For $D^k $ given in $\eqref{A15}$ with $\tau\in [\frac{3}{4},1]$, $r>\|A_2\|^2$ and \begin{equation}\label{A16}
	\beta^k(\tau r\beta^k+\sigma)\geq\tau r(\beta^{k+1})^2,~\beta^{k+1}\ge \beta^k,
	\end{equation} the sequence $\{v^{k+1}\}$ generated by LADMM \eqref{LADMM} satisfies the convergence condition \eqref{V12} with
	$H_0^k=\begin{pmatrix}
	\tau r(\beta^k)^2 I_{n_2}&0\\
	0&I_l
	\end{pmatrix}, $
	$r^k=\beta^k$, $R=I_{n_2}$, $z'=x_2'$, ${z}^k=x_2^{k+1}$ and  $\varTheta^k=\frac{1}{2}\|x_2^{k-1}-x_2^{k}\|^2_{\tau\beta^{k} D^{k}_1+(1-\tau)(\beta^{k})^2A_2^TA_2}$.
\end{theorem}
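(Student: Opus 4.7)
The plan follows the template of the preceding theorems on ADMM and GPALM, but with added care for the indefinite proximal term $D^k$. I will apply Lemma \ref{L4} to the LADMM framework with $z^k = x_2^{k+1}$, $z = z' = x_2'$, and $v = v'$, then multiply both sides by $\beta^k$. Using the identity \eqref{B13}, the quantity $\beta^k\cdot\tfrac12\|v^k-\widetilde v^k\|^2_{G^k}$ decomposes into $\tfrac12\tau r(\beta^k)^2\|x_2^k-x_2^{k+1}\|^2$, $\tfrac12\|\lambda^k-\lambda^{k+1}\|^2$, and the cross term $\beta^k(x_2^k-x_2^{k+1})^T A_2^T(\lambda^k-\lambda^{k+1})$. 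The first two pieces, together with the $H^k$-weighted norms of $v^{k+1}-v'$ and $v^k-v'$ supplied by Lemma \ref{L4}, can be promoted to $H_0^{k+1}$- and $H_0^k$-weighted norms by combining \eqref{A16} with $\beta^{k+1}\geq\beta^k$, in exactly the same manner as in the ADMM and GPALM proofs.

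The heart of the argument is a lower bound for the cross term obtained from the strong convexity of $f_2$. I would instantiate \eqref{B1} at iteration $k$ with test point $x_2^k$ and at iteration $k-1$ with test point $x_2^{k+1}$, and sum. The dual update rule $\lambda^{j-1}-\lambda^j = \beta^{j-1}(Ax^j-b)$ makes the mixed primal-dual terms involving $\beta^{k-1}(Ax^k-b)$ cancel exactly, and after rearrangement one obtains
\[
(x_2^k-x_2^{k+1})^T A_2^T(\lambda^k-\lambda^{k+1}) \geq \sigma\|x_2^k-x_2^{k+1}\|^2 + \|x_2^k-x_2^{k+1}\|^2_{D^k} + (x_2^k-x_2^{k+1})^T D^{k-1}(x_2^k-x_2^{k-1}).
\]
The trilinear residual couples iterations $k-1$, $k$, and $k+1$ and is precisely what forces the introduction of the auxiliary sequence $\varTheta^k$ into the additional convergence condition \eqref{V12}.

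To absorb the trilinear residual into a telescoping $\varTheta^{k+1}-\varTheta^k$, I will decompose $D^{k-1} = \tau D_1^{k-1} + (\tau-1)\beta^{k-1} A_2^T A_2$. Young's inequality applied separately to the positive definite piece $\tau D_1^{k-1}$ (using the standard $2a^T M b\geq-\|a\|^2_M-\|b\|^2_M$) and to the negative semidefinite piece $(\tau-1)\beta^{k-1}A_2^T A_2$ (using the reverse bound, valid since $\tau\leq1$), followed by $\beta^{k-1}\leq\beta^k$ on the $\|x_2^{k-1}-x_2^k\|^2$ halves, produces exactly $-2\varTheta^k$ from those halves. The matching $\|x_2^k-x_2^{k+1}\|^2$ halves are negative quadratics that I will absorb into the positive quadratics $\tfrac12\tau r(\beta^k)^2\|x_2^k-x_2^{k+1}\|^2$, $\beta^k\|x_2^k-x_2^{k+1}\|^2_{D^k}$, and $\beta^k\sigma\|x_2^k-x_2^{k+1}\|^2$ already present; what remains is then upgraded via \eqref{A16} to the $(\beta^{k+1})^2$-weighting required by $2\varTheta^{k+1}$.

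The main obstacle will be the weight bookkeeping in this last step. The indefiniteness of $D^{k-1}$ rules out any naive bound on the combined matrix, and the particular requirement $\tau\in[3/4,1]$ is exactly what makes the net coefficient of $\|x_2^k-x_2^{k+1}\|^2$ (and of $\|A_2(x_2^k-x_2^{k+1})\|^2$) nonnegative after the Young splits; the condition $r>\|A_2\|^2$ keeps $D_1^{k-1},D_1^k\succ0$ throughout. Once these sign conditions are checked and \eqref{A16} is used to lift $\beta^k$-weighted norms to $\beta^{k+1}$-weighted ones compatible with both $H_0^{k+1}$ and $\varTheta^{k+1}$, discarding the surviving nonnegative residue yields \eqref{V12} with the stated $H_0^k$, $r^k$, $R$, $z^k$, $z'$, and $\varTheta^k$.
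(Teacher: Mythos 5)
Your overall architecture matches the paper's: difference the optimality conditions \eqref{B1} at iterations $k$ and $k-1$ to obtain the strong-convexity lower bound on the cross term (the paper's \eqref{B7}), split $D^{k-1}=\tau D_1^{k-1}-(1-\tau)\beta^{k-1}A_2^TA_2$ and apply Young's inequality separately to the two pieces (the paper's \eqref{B6}), lift $\beta^{k-1}\to\beta^k$, and use \eqref{A16} to promote the weights. But there is a genuine gap in the absorption step. You propose to bound the \emph{entire} cross term $2(x_2^k-x_2^{k+1})^TA_2^T(\lambda^k-\lambda^{k+1})$ in \eqref{B13} by the strong-convexity estimate alone, and to absorb the resulting negative $\|A_2(x_2^k-x_2^{k+1})\|^2$ contributions only into $\tau r\beta^k\|x_2^k-x_2^{k+1}\|^2$, $\|x_2^k-x_2^{k+1}\|^2_{D^k}$ and $\sigma\|x_2^k-x_2^{k+1}\|^2$. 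Track the coefficients: writing $dx:=x_2^k-x_2^{k+1}$, after the Young split the available $dx$-quadratic is $2\tau D_1^k+(4\tau-3)\beta^kA_2^TA_2+2\sigma I_{n_2}$, which is indeed PSD for $\tau\geq 3/4$; but the telescoping forces you to surrender $\tau D_1^k+(1-\tau)\beta^kA_2^TA_2+\sigma I_{n_2}$ to manufacture $\varTheta^{k+1}$ (the $(1-\tau)\beta^kA_2^TA_2$ block inside $\varTheta$ is unavoidable, since it is exactly the Young penalty of the negative part of $D^{k-1}$ and must match $-\varTheta^k$). The residue is $\tau D_1^k+(5\tau-4)\beta^kA_2^TA_2+\sigma I_{n_2}=\beta^k\left(\tau rI_{n_2}-4(1-\tau)A_2^TA_2\right)+\sigma I_{n_2}$, which for $\tau\in[3/4,4/5)$ is indefinite whenever $\|A_2\|^2<r<\frac{4(1-\tau)}{\tau}\|A_2\|^2$, and the $\sigma I_{n_2}$ cannot rescue it because the deficit scales like $(\beta^k)^2$ while $\sigma\beta^k$ is only linear. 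So your plan does not close on the full stated range $\tau\in[3/4,1]$, $r>\|A_2\|^2$.

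The missing ingredient is the one positive term you never spend: $\frac{1}{\beta^k}\|\lambda^k-\lambda^{k+1}\|^2$ from $\|v^k-\widetilde v^k\|^2_{G^k}$ (your outline routes it toward the $H_0$ telescoping, where it is not needed, since $\beta^k H^k$ already has dual block $\frac1\gamma I_l$ acting on $\lambda^{k+1}-\lambda'$). The paper instead splits the cross term as $C+C$, bounds one copy by the strong-convexity estimate \eqref{B3}, and bounds the other by a direct Cauchy--Schwarz against the dual residual, $C\geq-\frac{\beta^k}{2(5-4\tau)}\|dx\|^2_{A_2^TA_2}-\frac{5-4\tau}{2\beta^k}\|\lambda^k-\lambda^{k+1}\|^2$ (\eqref{B4}), with the weight tuned so that both leftover coefficients become $2(\tau-\frac34)$ as in \eqref{B9}; the elementary inequality $(2\tau-1)(5-4\tau)\geq1$ on $[3/4,1]$ closes the argument. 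You need this second estimate, or an equivalent mechanism for trading $\|\lambda^k-\lambda^{k+1}\|^2$ against $\|A_2(x_2^k-x_2^{k+1})\|^2$, to cover $\tau$ near $3/4$. A minor further point: with the stated $\varTheta^k$, which already carries a factor $\frac12$, the symmetric Young split yields $-\varTheta^k$, not $-2\varTheta^k$.
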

\begin{proof}
	According to \eqref{B1}, we obtain
	\begin{equation}\label{B2} f_2(x_2)-f_2(x_2^{k})+(x_2-x_2^{k})^T
	[-A_2^T\lambda^{k}+D^{k-1}(x_2^k-x_2^{k-1})]\geq\frac{\sigma}{2}\|x_2^{k}-x_2\|^2,
	~\forall x_2.
	\end{equation}
	Putting \eqref{B1} (with $x_2=x_2^k$) and \eqref{B2} (with $x_2=x_2^{k+1}$) together yields that
	\begin{equation}\label{B7}
	\begin{aligned}
	&(x_2^k-x_2^{k+1})^T[
	A_2^T(\lambda^{k}-\lambda^{k+1})+D^{k}(x_2^{k+1}-x_2^{k})\\
	&-D^{k-1}(x_2^k-x_2^{k-1})]\geq{\sigma}\|x_2^{k}-x_2^{k+1}\|^2.
	\end{aligned}
	\end{equation}
	It follows from $r>\|A_2\|^2$ that $D_1^k\succ0$.
	For $\tau\in[3/4,1]$ and the non-decrease sequence $\{\beta^k\}$,
	by using Cauchy-Schwarz inequality, we have
	\begin{equation}\label{B6}
	\begin{aligned}
	&(x_2^k-x_2^{k+1})^TD^{k-1}(x_2^k-x_2^{k-1})\\
	=&(x_2^k-x_2^{k+1})^T[\tau D_1^{k-1}
	-(1-\tau)\beta^{k-1}A_2^TA_2](x_2^k-x_2^{k-1})
	\\\geq&-\frac{1}{2}\|x_2^k-x_2^{k+1}\|^2_{\tau D^{k-1}_1+(1-\tau)\beta^{k-1}A_2^TA_2}-\frac{1}{2}\|x_2^{k-1}-x_2^{k}\|^2_{\tau D^{k-1}_1+(1-\tau)\beta^{k-1}A_2^TA_2}\\\geq&
	-\frac{1}{2}\|x_2^k-x_2^{k+1}\|^2_{\tau D^{k}_1+(1-\tau)\beta^{k}A_2^TA_2}-\frac{1}{2}\|x_2^{k-1}-x_2^{k}\|^2_{\tau D^{k}_1+(1-\tau)\beta^{k}A_2^TA_2}.
	\end{aligned}
	\end{equation}
	Combining \eqref{B7} and \eqref{B6}, we obtain
	%\begin{small}
	\begin{equation}\label{B3}
	\begin{aligned}
	&(x_2^k-x_2^{k+1})^TA_2^T(\lambda^{k}-\lambda^{k+1})\\
	\geq&(x_2^k-x_2^{k+1})^T[ D^{k}(x_2^{k}-x_2^{k+1})+D^{k-1}(x_2^k-x_2^{k-1})  ]+\sigma\|x_2^{k}-x_2^{k+1}\|^2\\\geq&\|x_2^k-x_2^{k+1}\|^2_{D^k}-\frac{1}{2}\|x_2^k-x_2^{k+1}\|^2_{\tau D^{k}_1+(1-\tau)\beta^kA_2^TA_2}\\&-\frac{1}{2}\|x_2^{k-1}-x_2^{k}\|^2_{\tau D^{k}_1+(1-\tau)\beta^kA_2^TA_2}+\sigma\|x_2^{k}-x_2^{k+1}\|^2\\=&
	\frac{1}{2}\|x_2^k-x_2^{k+1}\|^2_{\tau D^{k}_1+(1-\tau)\beta^kA_2^TA_2+\sigma I_{n_2}}-\frac{1}{2}\|x_2^{k-1}-x_2^{k}\|^2_{\tau D^{k}_1+(1-\tau)\beta^kA_2^TA_2}\\&-2(1-\tau)\beta^k\|x_2^k-x_2^{k+1}\|^2_{A_2^TA_2}
	.
	\end{aligned}
	\end{equation}
	%	\end{small}
	For $\tau\in[3/4,1]$,	using Cauchy-Schwarz inequality again, we obtain\begin{equation}\label{B4}
	\begin{aligned}
	&(x_2^k-x_2^{k+1})^TA_2^T(\lambda^{k}-\lambda^{k+1})\\\geq&-\frac{\beta^k}{2(5-4\tau)}\|x_2^k-x_2^{k+1}\|^2_{A_2^TA_2}-
	\frac{5-4\tau}{2\beta^k}\|\lambda^{k}-\lambda^{k+1}\|^2\\
	\geq&-(\tau-\frac{1}{2})\beta^k\|x_2^k-x_2^{k+1}\|^2_{A_2^TA_2}-
	\frac{5-4\tau}{2\beta^k}\|\lambda^{k}-\lambda^{k+1}\|^2.
	\end{aligned}
	\end{equation}
	Then it follows from \eqref{B13}, \eqref{B3} and \eqref{B4} that
		\begin{equation}\label{B9}
		\begin{aligned}
	&	\|v^k-\widetilde{v}^k\|^2_{G^k}\\\geq&
		\frac{1}{2}\|x_2^k-x_2^{k+1}\|^2_{\tau D^{k}_1+(1-\tau)\beta^kA_2^TA_2+\sigma I_{n_2}}-\frac{1}{2}\|x_2^{k-1}-x_2^{k}\|^2_{\tau D^{k}_1+(1-\tau)\beta^kA_2^TA_2}\\&+\|x_2^k-x_2^{k+1}\|^2_{ \tau D^{k}_1}+2(\tau-\frac{3}{4})\{\beta^k\|  x_2^k-x_2^{k+1}\|^2_{ A_2^TA_2}+\frac{1}{\beta^k}\|\lambda^k-\lambda^{k+1}\|^2\}.
		\end{aligned}
		\end{equation}
	Note that  $\tau\in[3/4,1]$. Then it holds that
	\begin{eqnarray}\nonumber
	\tau D^{k}_1+(1-\tau)\beta^kA_2^TA_2&{=}&\tau r\beta^kI_{n_2}+(1-2\tau)\beta^kA_2^TA_2\\&\overset{\beta^{k+1}\geq\beta^k}{\succeq}& \tau r\beta^kI_{n_2}+(1-2\tau)\beta^{k+1}A_2^TA_2.\label{B29}
	\end{eqnarray}
	Hence, we have
	\begin{eqnarray}
	&&\beta^k[	\tau D^{k}_1+(1-\tau)\beta^kA_2^TA_2+\sigma I_{n_2} ]\nonumber\\&\overset{\eqref{B29},\beta^{k+1}\geq\beta^k}{\succeq}
	& [\tau r(\beta^k)^2+\sigma \beta^k]I_{n_2}+(1-2\tau)(\beta^{k+1})^2A_2^TA_2
	\nonumber\\&\overset{\eqref{A16}}{\succeq}&\tau r(\beta^{k+1})^2I_{n_2}+(1-2\tau)(\beta^{k+1})^2A_2^TA_2\label{B10}\\&=&\tau\beta^{k+1} D^{k+1}_1+(1-\tau)(\beta^{k+1})^2A_2^TA_2.\nonumber
	\end{eqnarray}
	Ignoring some nonnegative terms, multiplying both sides of \eqref{B9}  by $\beta^k$,  and then combining it with \eqref{B10} yields that
	\begin{equation}\label{B11}
	\begin{aligned}
	\beta^k\|v^k-\widetilde{v}^k\|^2_{G^k}\geq&\frac{1}{2}\|x_2^k-x_2^{k+1}\|^2_{\tau\beta^{k+1} D^{k+1}_1+(1-\tau)(\beta^{k+1})^2A_2^TA_2}\\&-\frac{1}{2}\|x_2^{k-1}-x_2^{k}\|^2_{\tau\beta^{k} D^{k}_1+(1-\tau)(\beta^{k})^2A_2^TA_2}.
	\end{aligned}
	\end{equation}
	According to the definition of $H^k$ and the condition \eqref{A16}, we have
	\begin{equation}\label{B12}
	\begin{aligned}
	&\beta^k[\|v^{k+1}-v'\|^2_{H^k}+\sigma\|{x}_2^{k+1}-x_2'\|^2-\|v^k-v'\|^2_{H^k}]\\
	\geq&\|v^{k+1}-v'\|^2_{H^{k+1}_0}-\|v^k-v'\|^2_{H^k_0}.
	\end{aligned}
	\end{equation}
	Combining \eqref{B11} and \eqref{B12} completes the proof.
\end{proof}

\subsection{Algorithms for solving \eqref{P3}}
We consider solving \eqref{P3}. For convenience, we define
\begin{align}
%\end{equation}
%the lower triangular matrix $J$ and $\widetilde{I}$ as
%\begin{equation}
\label{G24}
&&J:=\begin{pmatrix}
I_{l}&0&\cdots&0\\
I _{l}&I _{l}&\cdots&    0    \\
\vdots&\ddots&\ddots&\vdots\\
I_{l}&\cdots&I _{l}&I _{l}\\
\end{pmatrix}\in\mathbb{R}^{(m-1)l\times (m-1)l},～～～
\widetilde{I}:=\begin{pmatrix}
I_l&\cdots &I_l
\end{pmatrix}\in\mathbb{R}^{l\times (m-1)l}.
\end{align}
It has been shown in \cite{chen2016direct} that the  classical ADMM for three blocks may not converge. We consider the following  prediction-correction framework \cite{he2012alternating,he2017convergence} with dynamic settings of $\beta^k$ for solving \eqref{P3}:
\begin{framed}
	\noindent{\bf[Prediction step.]}	With given $x^k$ and $\lambda^k$, find $\widetilde{x}^k$ and $\widetilde{\lambda}^k$ such that
	\begin{equation}\label{D7}
	\begin{cases}
	\widetilde{x}_1^{k}=\arg\min\limits_{x_1}\{L_{\beta^k}({x}_1,x_2^k\cdots,x_m^k,\lambda^k),\\
	\widetilde{x}_j^{k}=\arg\min\limits_{x_j}\{L_{\beta^k}(\widetilde{x}_1^k,\cdots,\widetilde{x}^k_{j-1},x_j,x_{j+1}^k,\cdots,x_m^k,\lambda^k),~j=2,\cdots,m,\\
	\widetilde{\lambda}^k=\lambda^k-\beta^k(A_1\widetilde{x}_1^k+\sum_{j=2}^{m}A_jx_j^k-b).
	\end{cases}
	\end{equation}
\end{framed}
\begin{framed}
	\noindent {\bf [Correction step.]}
	\begin{equation}\label{D9}
	v^{k+1} =v^k-M^k(v^k-\widetilde{v}^k),
	\end{equation}
	where  $M^k=(P^k)^{-T}N^k$,
	$$P^k=\begin{pmatrix}
	\sqrt{\beta^k}	J&0\\
	0&\frac{1}{\sqrt{\beta^k}}I_l
	\end{pmatrix}
	,~
	N^k=\gamma\begin{pmatrix}
	\sqrt{\beta^k}I_{(m-1)l}&0\\
	-\sqrt{\beta^k}\widetilde{I}&\frac{1}{\sqrt{\beta^k}}I_l
	\end{pmatrix},
	~ \gamma\in(0,1]. $$
\end{framed}
Based on  the correction step \eqref{D9}, we have
\begin{eqnarray*}
	\lambda^{k+1}&&=\lambda^k-\gamma\left(\sum^m_{j=2}\beta^k A_j(\widetilde{x}^k_j-x^k_j)+(\lambda^k-\widetilde{\lambda}^k)\right)
	=\lambda^k-\gamma\beta^k(\sum_{i=1}^{m}A_i\widetilde{x}_i^k-b).
\end{eqnarray*}
Assume  that $f_m$ is $L$-gradient Lipschitz continuous.
Then using the optimality condition of  the $x_m$-subproblem in the prediction step \eqref{D7}, we have
\begin{equation}\label{D5}
\begin{aligned}
&f_m(x_m)-f_m(\widetilde{x}_m^{k})+(x_m-\widetilde{x}_m^{k})^T
[-A_m^T\lambda^k+\beta^kA_m^T(\sum_{i=1}^{m}A_i\widetilde{x}_i^k-b)]\\\geq&(x_m-\widetilde{x}_m^{k})^T[\nabla f_m(\widetilde{x}_m^k)-A_m^T\lambda^k+\beta^kA_m^T(\sum_{i=1}^{m}A_i\widetilde{x}_i^k-b)]
\\&+\frac{1}{2L}\|\nabla f_m(\widetilde{x}_m^k)-\nabla f_m({x}_m)\|^2,~\forall x_m,
\end{aligned}
\end{equation}
and
\begin{equation}
\label{D6}
\begin{aligned}
0&=\nabla f_m(\widetilde{x}_m^k)-A_m^T\lambda^k+\beta^kA_m^T(\sum_{i=1}^{m}A_i\widetilde{x}_i^k-b)\\&=\nabla f_m(\widetilde{x}_m^k)+(\frac{1}{\gamma}-1)A_m^T\lambda^k-\frac{1}{\gamma}A_m^T\lambda^{k+1}.
\end{aligned}
\end{equation}
Since $0= \nabla f_m(x_m^*)-A_m^T\lambda^*$, it follows from \eqref{D6} that
\begin{equation}
\begin{aligned}
&\frac{1}{2L}\|\nabla f_m(\widetilde{x}_m^k)-\nabla f_m({x}^*_m)\|^2=\frac{1}{2L}\Big\|A_m^T\left((1-\frac{1}{\gamma})\lambda^k+\frac{1}{\gamma}\lambda^{k+1}
-\lambda^*\right)\Big\|^2 \\ \geq&\frac{\sigma_{\min}(A_mA_m^T)}{2L}\Big\|(1-\frac{1}{\gamma})\lambda^k+\frac{1}{\gamma}\lambda^{k+1}
-\lambda^*\Big\|^2.\label{V32}
\end{aligned}
\end{equation}
Based on an analysis  similar to that in \cite[Theorem 3.3]{he2017convergence}, we can rewrite the prediction step \eqref{D7} as
\begin{framed}
	\noindent{\bf[Prediction step.]} With a given $v^k$, find $\widetilde{u}^k$ such that
	\begin{equation}
	\begin{aligned}
	&f(x)-f(\widetilde{x}^k)+(u-\widetilde{u}^k)^TF(\widetilde{u}^k)\\\geq&(v-\widetilde{v}^k)^TQ^k(v^k-\widetilde{v}^k)+\frac{1}{2L}\|\nabla f_m(\widetilde{x}_m^k)-\nabla f_m({x}_m)\|^2,~\forall u,
	\end{aligned}
	\end{equation}
	where \begin{equation}
	\label{D16}
	Q^k=\begin{pmatrix}
	{\beta^k}J&0\\
	-\widetilde{I}&\frac{1}{{\beta^k}}I_l
	\end{pmatrix}.\end{equation}
\end{framed}

For the prediction-correction framework \eqref{D7}-\eqref{D9}, let $H^k$ and $G^k\succeq 0$  satisfy  $\eqref{V9}$-\eqref{V10}.  Then it holds that
\begin{equation}
\label{F4}
\begin{aligned}
&H^k=Q^k(N^k)^{-1}(P^k)^T=\frac{1}{\gamma}P^k(P^k)^T=\frac{1}{\gamma}\begin{pmatrix}
\beta^kJJ^T&0\\
0&\frac{1}{\beta^k}I_l\\
\end{pmatrix},\\&
G^k=(Q^k)^T+Q^k-(M^k)^TH^kM^k
\\&~~~~=(Q^k)^T+Q^k-\frac{1}{\gamma}(N^k)^TN^k
\overset{(*)}{\succeq} (\frac{1}{\gamma^2}-\frac{1}{\gamma})(N^k)^TN^k\overset{\gamma\in(0,1]}{\succeq}0,
\end{aligned}
\end{equation}
where $(*)$ follows from the fact that
\begin{equation}
\nonumber
\begin{aligned}
(Q^k)^T+Q^k&=\begin{pmatrix}
\beta^k(J^T+J)&-\widetilde{I}^T\\
-\widetilde{I}&\frac{2}{\beta^k}I_l
\end{pmatrix}
\succeq \begin{pmatrix}
\beta^k(J^T+J)&-\widetilde{I}^T\\
-\widetilde{I}&\frac{1}{\beta^k}I_l
\end{pmatrix}=\frac{1}{\gamma^2}(N^k)^TN^k.
\end{aligned}
\end{equation}
Now we have
\begin{equation}
\begin{aligned}
&f(x^*)-f(\widetilde{x}^k)+(u^*-\widetilde{u}^k)^TF(\widetilde{u}^k)\geq
\frac{1}{2}\left(\|v^{k+1}-v^*\|^2_{H^k}-\|v^k-v^*\|^2_{H^k}\right)\\
&+\frac{1}{2}\|v^k-\widetilde{v}^k\|^2_{G^k}+\frac{1}{2L}\|\nabla f_m(\widetilde{x}_m^k)-\nabla f_m({x}^*_m)\|^2.
\end{aligned}
\label{V39}
\end{equation}
Then the following theorem ensures Theorem \ref{main}. For simplicity, we define
\begin{eqnarray}\label{D33}
H_{0}^k&:=&\frac{1}{\gamma}\begin{pmatrix}
JJ^T&0\\
0&\Big(\frac{1}{(\beta^{k})^2}+(1-\gamma)\frac{\sigma_{\min}(A_mA_m^T)}{L\beta^{k}} \Big)I_l	
\end{pmatrix}.
\end{eqnarray}
\begin{theorem}\label{D12}
	Suppose that $f_m$ is $L$-gradient Lipschitz continuous. Let $\{\beta^k\}$ satisfy that
	\begin{equation}\label{D10}
	\frac{1}{(\beta^k)^2}+\frac{\sigma_{\min}(A_mA_m^T)}{ L\beta^k}\geq\frac{1}{(\beta^{k+1})^2}+(1-\gamma)\frac{\sigma_{\min}(A_mA_m^T)}{L\beta^{k+1}}.
	\end{equation}
	Then $\{v^{k+1}\}$ generated by the prediction step \eqref{D7} and the correction step  \eqref{D9} satisfies  the convergence condition \eqref{V12} with
	$r^k=\frac{1}{\beta^k}$, $R=I_{n_m}$,  ${z}^k=\nabla f_m(\widetilde{x}^k_m)$, $\sigma=\frac{\sigma_{\min}(A_mA_m^T)}{L}$, $\varTheta^k\equiv0$, $H_0^k$ defined in \eqref{D33} and the special setting $z'=\nabla f_m(x_m^*),~v'=v^*$.
\end{theorem}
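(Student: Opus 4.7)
The plan is to verify the generalized convergence condition \eqref{V12} directly by multiplying \eqref{V39} (which specialises Lemma \ref{L4} to the setting $v'=v^*$, $z'=\nabla f_m(x_m^*)$) through by $r^k = 1/\beta^k$ and matching the result against the telescoping right-hand side of \eqref{V12}. Setting $\alpha := \sigma_{\min}(A_m A_m^T)/L$, so that the prescribed $\sigma$ coincides with $\alpha$, I first observe that the primal block of $\tfrac{1}{\beta^k}H^k$ collapses to $\tfrac{1}{\gamma}JJ^T$ on the $(A_2 x_2,\dots,A_m x_m)$-coordinates, which matches the primal block of both $H_0^{k+1}$ and $H_0^k$ in \eqref{D33} exactly. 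Hence the primal contributions to \eqref{V12} cancel without residual, and the entire verification reduces to an inequality involving only the dual block and the gradient residual.

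After this cancellation, \eqref{V12} is equivalent to
\[
\Big(\tfrac{1}{\gamma(\beta^k)^2}-\tfrac{1}{\gamma(\beta^{k+1})^2}-\tfrac{(1-\gamma)\alpha}{\gamma\beta^{k+1}}\Big)\|\lambda^{k+1}-\lambda^*\|^2 + \tfrac{(1-\gamma)\alpha}{\gamma\beta^k}\|\lambda^k-\lambda^*\|^2 + \tfrac{\alpha}{\beta^k}\|\nabla f_m(\widetilde{x}_m^k)-\nabla f_m(x_m^*)\|^2 + \tfrac{1}{\beta^k}\|v^k-\widetilde{v}^k\|^2_{G^k}\geq 0.
\]
Next I would invoke condition \eqref{D10}, rearranged as $\tfrac{1}{(\beta^k)^2}-\tfrac{1}{(\beta^{k+1})^2}-\tfrac{(1-\gamma)\alpha}{\beta^{k+1}}\geq -\tfrac{\alpha}{\beta^k}$, to lower-bound the coefficient of $\|\lambda^{k+1}-\lambda^*\|^2$ by $-\alpha/(\gamma\beta^k)$. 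After clearing $\gamma\beta^k/\alpha$, this leaves the task of establishing
\[
\gamma\|\nabla f_m(\widetilde{x}_m^k)-\nabla f_m(x_m^*)\|^2 + (1-\gamma)\|\lambda^k-\lambda^*\|^2 + \tfrac{\gamma}{\alpha}\|v^k-\widetilde{v}^k\|^2_{G^k} \geq \|\lambda^{k+1}-\lambda^*\|^2.
\]

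The final and most delicate step combines \eqref{V32}, which via \eqref{D6} gives $\|\nabla f_m(\widetilde{x}_m^k)-\nabla f_m(x_m^*)\|^2 \geq \sigma_{\min}(A_m A_m^T)\|c^k\|^2$ with $c^k := (1-1/\gamma)(\lambda^k-\lambda^*)+(1/\gamma)(\lambda^{k+1}-\lambda^*)$, with the convex-combination identity $\lambda^{k+1}-\lambda^* = \gamma c^k + (1-\gamma)(\lambda^k-\lambda^*)$ (which holds because $\gamma\in(0,1]$) together with Jensen's inequality $\|\lambda^{k+1}-\lambda^*\|^2 \leq \gamma\|c^k\|^2 + (1-\gamma)\|\lambda^k-\lambda^*\|^2$. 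These two bounds line up the $\|c^k\|^2$ terms correctly, and the $(1-\gamma)\|\lambda^k-\lambda^*\|^2$ terms match automatically; the cross-terms from Jensen must be absorbed by the $G^k$ residual, exploiting the fact that $(1/\gamma^2-1/\gamma)(N^k)^TN^k$ gives $\|v^k-\widetilde{v}^k\|^2_{G^k}$ a quantitative lower bound of the form $(1-\gamma)\|\lambda^k-\lambda^{k+1}\|^2/\beta^k$ coming from the correction step $\lambda^{k+1}=\lambda^k-\gamma\beta^k(A\widetilde{x}^k-b)$. The main obstacle is this last absorption: it is precisely the dynamic perturbation $(1-\gamma)\alpha/\beta^k$ built into $H_0^k$ that supplies the extra slack in the telescoping so that the Jensen cross-terms can be cancelled, which in turn is what forces the form of the growth condition \eqref{D10} on $\{\beta^k\}$.
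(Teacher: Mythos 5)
Your overall route coincides with the paper's: multiply \eqref{V39} by $r^k=1/\beta^k$, observe that the primal blocks of $\tfrac{1}{\beta^k}H^k$ and of $H_0^k$ are both $\tfrac{1}{\gamma}JJ^T$ and cancel, convert the gradient residual into a multiplier residual via \eqref{V32}, and telescope the dual block using \eqref{D10}. Your algebraic reduction to the scalar inequality in $\lambda$ is correct. The difficulty is in your final step, and it is twofold. First, you double-count $\sigma_{\min}(A_mA_m^T)$: the quantity actually supplied by the prediction step and appearing in \eqref{V39} is $\tfrac{1}{L}\|\nabla f_m(\widetilde x_m^k)-\nabla f_m(x_m^*)\|^2$, and \eqref{V32} converts it \emph{once} into $\alpha\|c^k\|^2$ with $\alpha=\sigma_{\min}(A_mA_m^T)/L$ and $c^k=(1-\tfrac1\gamma)(\lambda^k-\lambda^*)+\tfrac1\gamma(\lambda^{k+1}-\lambda^*)$. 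By instead weighting the gradient residual by $\alpha$ and then invoking \eqref{V32} again, your last displayed inequality needs $\gamma\|\nabla f_m(\widetilde x_m^k)-\nabla f_m(x_m^*)\|^2\ge\gamma\|c^k\|^2$, whereas \eqref{V32} only delivers $\gamma\,\sigma_{\min}(A_mA_m^T)\|c^k\|^2$; this is short by a factor of $\sigma_{\min}(A_mA_m^T)$ whenever that eigenvalue is below one. The consistent bookkeeping (and what the paper does) is to carry the term $\tfrac{1}{L}\|\nabla f_m(\cdot)-\nabla f_m(\cdot)\|^2$ through and apply \eqref{V32} exactly once.

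Second, the ``delicate absorption'' you describe is not needed and is not how the argument closes. With $\lambda^{k+1}-\lambda^*=\gamma c^k+(1-\gamma)(\lambda^k-\lambda^*)$, the exact identity \eqref{V33} gives
$$\|c^k\|^2=(1-\tfrac1\gamma)\|\lambda^k-\lambda^*\|^2+\tfrac1\gamma\|\lambda^{k+1}-\lambda^*\|^2+\tfrac1\gamma(\tfrac1\gamma-1)\|\lambda^k-\lambda^{k+1}\|^2,$$
and for $\gamma\in(0,1]$ the last term is \emph{nonnegative}, so it is simply discarded: Jensen's inequality holds with a favorable residual, there are no adverse cross-terms to absorb, $G^k\succeq0$ is dropped wholesale (as established in \eqref{F4}), and no quantitative lower bound on $\|v^k-\widetilde v^k\|^2_{G^k}$ of the kind you propose is required. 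The role of the $(1-\gamma)\alpha/\beta^k$ perturbation in $H_0^k$ is not to cancel cross-terms but to park the $(1-\gamma)\|\lambda^k-\lambda^*\|^2$ piece of the convex combination on the $-\|v^k-v^*\|^2_{H_0^k}$ side of the telescope; \eqref{D10} is then precisely the statement that $\tfrac{1}{\gamma(\beta^k)^2}+\tfrac{\alpha}{\gamma\beta^k}$ dominates the dual block of $H_0^{k+1}$.
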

\begin{proof}
	We can verify that
	\begin{equation}
	\label{V33}
	\begin{aligned}
	&\Big	\|(1-\frac{1}{\gamma})\lambda^k+\frac{1}{\gamma}\lambda^{k+1}
	-\lambda^*\Big\|^2\\=&(1-\frac{1}{\gamma})\|\lambda^k-\lambda^*\|^2+\frac{1}{\gamma}\|\lambda^{k+1}-\lambda^*\|^2-(1-\frac{1}{\gamma})\frac{1}{\gamma}\|\lambda^k-\lambda^{k+1}\|^2.
	\end{aligned}
	\end{equation}
	Since  $\gamma\in(0,1]$ and  $\eqref{D10}$, it holds that
	\begin{eqnarray*}
		&&\frac{1}{\beta^k}\{the~right~side~of ~\eqref{V39}\}\\~&\overset{G^k\succeq 0}{\geq}&	\frac{1}{2\beta^k}\Big[\|v^{k+1}-v^*\|^2_{H^k}-\|v^k-v^*\|^2_{H^k}+ \frac{1}{L}\|\nabla f_m(\widetilde{x}_m^k)-\nabla f_m({x}^*_m)\|^2 \Big]\\&\overset{\eqref{V32},\eqref{V33}}{\geq}&
		\frac{1}{2\beta^k}\Big[\|v^{k+1}-v^*\|^2_{H^k}+
		\frac{\sigma_{\min}(A_mA_m^T)}{\gamma L}  \|\lambda^{k+1}-\lambda^*\|^2   \Big]
		\\&&-\frac{1}{2\beta^k}\Big[
		\|v^k-v^*\|^2_{H^k}+	\frac{(1-\gamma)\sigma_{\min}(A_mA_m^T)}{\gamma L}  \|\lambda^{k}-\lambda^*\|^2
		\Big]	\\&\overset{\eqref{D10}}{\geq}&\frac{1}{2}\left[\|v^{k+1}-v^*\|^2_{H_0^{k+1}}-\|v^k-v^*\|^2_{H_0^k}\right].
	\end{eqnarray*}
	The proof is complete.	
\end{proof}

\begin{rem}	
	If $m=2$ and $\gamma=1$, the prediction-correction framework \eqref{D7}-\eqref{D9} reduce to ADMM \eqref{ADMM} with $\gamma=1$. For this special case, the $O(1/K^2)$ ergodic convergence rate under  the gradient Lipschitz continuous assumption has been given in \cite{2016An}. To the best of our knowledge, our general result is the first one for
	the multi-block type ADMM with only one block being gradient Lipschitz continuous.
	%Especially, based on \eqref{F1} and \eqref{F2}
\end{rem}

Before ending this subsection, we consider solving \eqref{P2} with $f_1(x_1)=g^Tx_1$ by the following proximal ADMM:
\begin{eqnarray*}
	\begin{cases}
		x_1^{k+1}\in\arg\min\limits_{x_1}\{L_{\beta^k}(x_1,x_2^k,\lambda^k)+\frac{1}{2}\|x_1-x_1^k\|^2_{D^k}\},\\
		x_2^{k+1}\in\arg\min\limits_{x_2}L_{\beta^k}(x_1^{k+1},x_2,\lambda^k),\\
		\lambda^{k+1}=\lambda^k-\beta^k(A_1x^{k+1}_1+A_2x^{k+1}_2-b),
	\end{cases}
\end{eqnarray*}
where $D^k\succeq0$. Let  \eqref{V18} and \eqref{V19} hold with $v=(x_1, x_2,\lambda)$. Then $v^k$ satisfies the prediction-correction framework
\eqref{V7}-\eqref{V8} with
$$Q^k=\begin{pmatrix}
D^k&0&0\\
0&\beta^kA_2^TA_2&0\\
0&-A_2&\frac{1}{\beta^k}I_l
\end{pmatrix},~M^k=\begin{pmatrix}
I_{n_1}&0&0\\
0&I_{n_2}&0\\
0&-\beta^kA_2& I_l
\end{pmatrix}. $$
The convergence conditions \eqref{V9}-\eqref{V10} hold with $$
H^k=\begin{pmatrix}
D^k&0&0\\
0&\beta^kA_2^TA_2&0\\
0&0&\frac{1}{\beta^k}I_l
\end{pmatrix}.
$$

If $f_2$  is gradient Lipschitz continuous, the convergence condition \eqref{V12} also holds with $r^k=1/\beta^k$ if we set  $D^k=\beta^kI_{n_1}$. In this case, the $x_1$-subproblem becomes
$$0=g-A_1^T\lambda+\beta^kA_1^T( A_1x^{k+1}_1+A_2x^{k}_2-b )+\beta^k(x_1^{k+1}-x_1^k).
$$
It is worth mentioning  that it suffices to calculate the inverse of $A_1^TA_1+I_{n_1}$ only once in all iterations. We can refer to \cite{li2022alternating} for more discussions on  $O(1/K^2)$ convergence rates in this situation.

If $f_2$ is strongly convex, in order to satisfy the convergence condition \eqref{V12} with $r^k=\beta^k$, we set $D^k=I_{n_1}/\beta^k$.
It turns out that we have to calculate the inverse of $A_1^T A_1+I_{n_1}/(\beta^k)^2$  in every iteration.

\section{Conclusions}
We establish a generalized prediction-correction framework for building ergodic convergence rate. Based on the generalized prediction-correction framework,
we give a few Lagrangian-based methods which enjoy $O(1/K^2)$ ergodic convergence rate under the  assumption of strongly convex or gradient Lipchitz continuous,  such as the general linearized  ALM with an indefinite proximal term, ADMM with a larger step size up to $(1+\sqrt{5})/2$,  the linearized ADMM with an indefinite proximal term and multi-block ADMM type method. Building the non-ergodic convergence rate from prediction-correction framework is the future work.

\bibliographystyle{siamplain}
\bibliography{ref}

\end{document}